\renewcommand*\subjclass[2][2000]{%
  \def\@subjclass{#2}%
  \@ifundefined{subjclassname@#1}{%
    \ClassWarning{\@classname}{Unknown edition (#1) of Mathematics
      Subject Classification; using '1991'.}%
  }{%
    \@xp\let\@xp\subjclassname\csname subjclassname@#1\endcsname
  }%
}
\newtheorem{theorem}{Theorem}[section]
\newtheorem{corollary}{Corollary}[section]
\newtheorem{claim}{Claim}[section]
\newtheorem{case}{Case}[section]
\newtheorem{lemma}{Lemma}[section]
\theoremstyle{definition}
\theoremstyle{remark}
\newtheorem{remark}{Remark}[section]
\numberwithin{equation}{section}
\def\XXint#1#2#3{{\setbox0=\hbox{$#1{#2#3}{\int}$}
\vcenter{\hbox{$#2#3$}}\kern-.5\wd0}}
\def\le{\leqslant}
\def\ge{\geqslant}
\begin{document}

\title{Schwarz lemma for hyperbolic harmonic mappings in the unit ball}

\author{Jiaolong Chen}
\address{Jiaolong Chen, Key Laboratory of High Performance Computing and Stochastic Information Processing (HPCSIP)
(Ministry of Education of China), School of Mathematics and Statistics, Hunan Normal University, Changsha, Hunan 410081, People's Repulic of China}
\email{jiaolongchen@sina.com}

\author{David Kalaj}
\address{University of Montenegro, Faculty of Natural Sciences and
Mathematics, Cetinjski put b.b. 81000 Podgorica, Montenegro}
\email{davidkalaj@gmail.com}

\keywords{Hyperolic harmonic mappings,  Schwarz inequality, Hardy space}

\subjclass{Primary 31B05; Secondary 42B30}

\maketitle

\makeatletter\def\thefootnote{\@arabic\c@footnote}\makeatother

\begin{abstract}
Assume that $p\in[1,\infty]$ and $u=P_{h}[\phi]$,
 where $\phi\in L^{p}(\mathbb{S}^{n-1},\mathbb{R}^n)$ and $u(0) = 0$.
Then we obtain the sharp inequality $|u(x)|\le G_p(|x|)\|\phi\|_{L^{p}}$ for some smooth  function $G_p$ vanishing at $0$.
Moreover, we obtain an explicit form of the sharp constant $C_p$ in the inequality $\|Du(0)\|\le C_p\|\phi\|_{L^{p}}$.
These two results generalize and extend some known result from harmonic mapping theory (\cite[Theorem 2.1]{kalaj2018}) and hyperbolic harmonic theory (\cite[Theorem 1]{bur}).
\end{abstract}

\maketitle

\section{Introduction}\label{intsec}
 For $n\geq1$, let $\mathbb{R}^n$ be the standard Euclidean space with the norm $|x|=\sqrt{\sum_{i=1}^{n} x_i^2}$.
We use $\mathbb{B}^n$ and $\mathbb{B}_n$ to denote the unit ball in $\mathbb{R}^n$
and the unit ball in $\mathbb{C}^n\cong\mathbb{R}^{2n}$, respectively.
For $A=\big(a_{ij}\big)_{n\times n}\in \mathbb{R}^{n\times n}$,
 the matrix norm of $A$ is defined by
 $\|A\|=\sup\{|A\xi|:\; \xi\in \mathbb{S}^{n-1}\}$.

A mapping $u=(u_1,\cdots,u_n)\in C^{2}(\mathbb{B}^{n}, \mathbb{R}^{n})$ is said to be {\it hyperbolic harmonic} if
 $$\Delta_{h}u=(\Delta_{h}u_{1}, \cdots,\Delta_{h}u_{n})=0,$$  that is,
 for each $j\in \{1,\cdots, n\}$, $u_j$ satisfies the hyperbolic Laplace equation
$$
\Delta_{h}u_{j} (x)=(1-|x|^2)^2\Delta u_{j}(x)+2(n-2)(1-|x|^2)\sum_{i=1}^{n} x_{i} \frac{\partial u_{j}}{\partial x_{i}}(x)=0,
$$
 where
 $\Delta$ denotes the usual Laplacian in $\mathbb{R}^{n}$.
For convenience, in the rest of this paper, we call $\Delta_{h}$ the {\it hyperbolic Laplacian operator}.

When $n=2$, we easily see that hyperbolic harmonic mappings coincide with harmonic mappings.
In this paper, we focus our investigations on the case when $n\geq 3$.

For $p\in(0,\infty]$, the {\it Hardy space} $\mathcal{H}^{p}(\mathbb{B}^{n}, \mathbb{ R}^{n} )$ consists of all those mappings
$f: \mathbb{B}^{n}\rightarrow\mathbb{ R}^{n}$ such that $f$ is measurable, $M_{p}(r,f)$ exists for all $r\in (0,1)$ and $\|f\|_{\mathcal{H}^p}<\infty$, where
$$\|f\|_{\mathcal{H}^p}=\sup_{0<r<1} \big\{M_{p}(r,f)\big\}$$
and
$$\;\;M_{p}(r,f)=\begin{cases}
\displaystyle \;\left( \int_{\mathbb{S}^{n-1}} |f(r\xi)|^{p}d\sigma(\xi)\right)^{\frac{1}{p}} , & \text{ if } p\in (0,\infty),\\
\displaystyle \;\sup_{\xi\in \mathbb{S}^{n-1}} \big\{|f(r\xi)|\big\} , \;\;\;\;& \text{ if } p=\infty.
\end{cases}$$

\noindent Here and hereafter, $d \sigma$ always denotes the normalized surface measure on the unit sphere $\mathbb{S}^{n-1}$ in $\mathbb{R}^{n}$ so that $\sigma(\mathbb{S}^{n-1})=1$.

Similarly, for $p\in(0,\infty]$, we use $H^p(\mathbb{B}_{n},\mathbb{C}^{n})$ to denote
the Hardy space of mappings from $\mathbb{B}_n$ into $\mathbb{C}^{n}$.

If $\phi\in L^{1}(\mathbb{S}^{n-1},\mathbb{R}^{n})$, we define the {\it invariant Poisson integral} or {\it Poisson-Szeg\"{o} integral} of $\phi$ in $\mathbb{B}^{n}$ by (cf. \cite[Definition 5.3.2]{sto2016})
$$
P_{h}[\phi](x)=\int_{\mathbb{S}} P_{h}(x,\zeta)\phi(\zeta)d\sigma(\zeta),
$$
where
$$
 P_{h}(x,\zeta)=\left(\frac{1-|x|^2}{|x-\zeta|^{2}}\right)^{n-1}
$$
is the {\it Poisson-Szeg\"{o} kernel} with respective to $\Delta_{h}$ satisfying $$\int_{\mathbb{S}} P_{h}(x,\zeta) d\sigma(\zeta)=1$$
(cf. \cite[Lemma 5.3.1(c)]{sto2016}).
Similarly, if $\mu$ is a finite signed Borel measure in $\mathbb{S}^{n-1}$,
then invariant Poisson integral of $\mu$ will be denoted by $P_{h}[\mu]$, that is,
$$
P_{h}[\mu](x)=\int_{\mathbb{S}} P_{h}(x,\zeta) d\mu(\zeta).
$$
Furthermore, both $P_{h}[\phi]$ and $P_{h}[\mu]$ are hyperbolic harmonic in
$\mathbb{B}^{n}$ (cf. \cite{bur, sto2016}).

It is known that
if $\Delta_{h}u=0$ and $u\in \mathcal{H}^p(\mathbb{B}^n, \mathbb{R}^{n})$ with $1<p\leq\infty$,
 then $u$ has the following integral representation
 (cf. \cite[Theorem 7.1.1(c)]{sto2016})
$$
u(x)= P_{h}[\phi](x),
$$
where
$\phi\in L^{p}(\mathbb{S}^{n-1},\mathbb{R}^{n})$
is the boundary value of $u$ and
\begin{equation}\label{eq-1.1}
\|\phi\|_{L^p}= \|u\|_{\mathcal{H}^p}.
\end{equation}
If $\Delta_{h}u=0$ and $u\in \mathcal{H}^1(\mathbb{B}^n, \mathbb{R})$, then $u$ has the representation $u =P_{h}[\mu]$,
where $\mu$ is a signed Borel measure in $\mathbb{B}^n$.
Further, the similar arguments as \cite[Page 118]{ABR} show that
$\|u\|_{\mathcal{H}^1}=\|\mu\|$, where $\|\mu\|$ is the total variation of $\mu$ on $\mathbb{S}^{n-1}$.

In \cite{maro},  Macintyre and Rogosinski proved the following result:
Let $p\in[1,\infty]$ and $f$ be a holomorphic mapping in the unit disk $\mathbb{B}_1$ such that $f(0)=0$ and $\|f\|_{H^p}<\infty$,
then for $z\in \mathbb{B}_1$,
$$
|f(z)|\le \frac{|z|}{(1-|z|^2)^{1/p}}\|f\|_{H^p}
$$
with extremal functions $f(w)=\frac{A w}{(1-\bar z w)^{2/p}}$.
This is a generalization of Schwarz lemma (for $p=\infty$ it coincides with the classical Schwarz lemma).
For the high dimensional case, see \cite[Theorem~4.17]{zhu}.

The classical Schwarz lemma for harmonic mappings (\cite[Lemma 6.24]{ABR}) states that
if $f: \mathbb{B}^{n}\to \mathbb{R}^{n}$ is a bounded harmonic mapping with $f(0)=0$, then
$$
|f(x)|\le U(|x|e_{n})\|f\|_{\mathcal{H}^{\infty}}.
$$
Here $e_{n}=(0,\dots,0,1)\in \mathbb{S}^{n-1}$ and $U$ is a harmonic function of $\mathbb{B}^{n}$ into $[-1,1]$ defined by
$$
U(x)= P[\chi_{\mathbb{S}^{n-1}_{+}}-\chi_{\mathbb{S}^{n-1}_{-}}](x),
$$
where $\chi$ is the indicator function,
$\mathbb{S}^{n-1}_{+}=\{x\in \mathbb{S}^{n-1}: x_n \geq 0\},$ $\mathbb{S}^{n-1}_{-}=\{x\in \mathbb{S}^{n-1}: x_n \leq 0\}$ and
$P[\chi_{\mathbb{S}^{n-1}_{+}}-\chi_{\mathbb{S}^{n-1}_{-}}]$ is the Poisson integral of $\chi_{\mathbb{S}^{n-1}_{+}}-\chi_{\mathbb{S}^{n-1}_{-}}$ with respective to $\Delta$.
If $f:\mathbb{B}^{n}\rightarrow\mathbb{R}$ is a harmonic or hyperbolic harmonic mapping with $|f(x)|\leq1$ and $f(0)\in(-1,1)$, Burgeth \cite[Theorem 1]{bur} proved the following inequality
  $$
  m^{n}_{c}(|x|)\leq f(x)\leq M^{n}_{c}(|x|),
  $$
where $c=\frac{f(0)+1}{2}$, $m^{n}_{c}(|x|)$ and $M^{n}_{c}(|x|)$ are two functions in $\mathbb{B}^{n}$.
Recently, in \cite[Theorem 2.1]{kalaj2018}, the author find a sharp function $g_{p}$ for harmonic mappings $f$ in $\mathcal{H}^p(\mathbb{B}^{n},\mathbb{R}^{n})$ with $f(0)=0$:
For $p\in[1,\infty]$ and $x\in \mathbb{B}^{n}$,
$$
|f(x)|\le g_p(|x|)\|f\|_{\mathcal{H}^p}\;\;\text{and}\;\;
\|Df(0)\|\le n \left(\frac{\Gamma\left[\frac{n}{2}\right] \Gamma\left[\frac{1+q}{2}\right]}{\sqrt \pi\Gamma\left[\frac{n+q}{2}\right]}\right)^{\frac{1}{q}}\|f\|_{\mathcal{H}^p},
$$
where $q$ is the conjugate of $p$ and $Df(0):\mathbb{R}^n\to\mathbb{R}^n$ is the formal derivative.
See also \cite{kr} for related discussions.

 In this paper, we will establish the following counterpart of \cite[Theorem 2.1]{kalaj2018} in the setting of hyperbolic harmonic mappings in $\mathcal{H}^p(\mathbb{B}^{n},\mathbb{R}^{n})$.

\begin{theorem}\label{thm-1.1}
 Let $p\in[1,\infty]$, $q$ be its conjugate and for $r\in[0,1)$, define
\begin{equation}\label{eq-1.2}
G_p(r)=\left\{
            \begin{array}{ll}
              \inf_{a\in [0,\infty)}\sup_{\eta\in \mathbb{S}^{n-1}} |P_h(re_{n},\eta)-a|, & \hbox{if $q=\infty$;} \\
              \inf_{a\in [0,\infty)}\left(\int_{\mathbb{S}^{n-1}} |P_h(re_{n},\eta)-a|^qd\sigma(\eta)\right)^{1/q}, & \hbox{if $q\in[1,\infty)$}.
            \end{array}
          \right.
\end{equation}
 Suppose that $u=P_{h}[\phi]$ and $u(0)=0$, where
$\phi\in L^p(\mathbb{S}^{n-1},\mathbb{R}^{n})$.
 Then for any $x\in \mathbb{B}^n$,
\begin{equation}\label{eq-1.3}
|u(x)|\le G_p(|x|)\|\phi\|_{L^{p}}
\end{equation}
 and
 \begin{equation}\label{eq-1.4}
 \|Du(0)\|\le 2(n-1) \left(\frac{\Gamma\left[\frac{n}{2}\right] \Gamma\left[\frac{1+q}{2}\right]}{\sqrt \pi\Gamma\left[\frac{n+q}{2}\right]}\right)^{\frac{1}{q}}\|\phi\|_{L^{p}}.
 \end{equation}
Both inequalities \eqref{eq-1.3} and \eqref{eq-1.4} are sharp

In particular, if $p\in[1,\infty )$,
then $G_p$  is a increasing diffeomorphism of $[0,1)$ onto $[0,\infty)$ with $G_p(0)=0$;
if $p=\infty$,
then $G_{\infty}(r)=U_{h}(re_{n})$
and $G_{\infty}$  is an increasing diffeomorphism of $[0,1)$ onto itself,
where
$U_{h}=P_{h}[\chi_{\mathbb{S}_{+}^{n-1}} -\chi_{\mathbb{S}_{-}^{n-1}}]$.
\end{theorem}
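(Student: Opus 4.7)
The plan is to exploit the hypothesis $u(0)=0$, which by the normalization $P_{h}(0,\zeta)\equiv 1$ forces $\int_{\mathbb{S}^{n-1}}\phi\,d\sigma=0$, so that for any constant $a\in[0,\infty)$ one may rewrite
$$u(x)=\int_{\mathbb{S}^{n-1}}\bigl(P_{h}(x,\zeta)-a\bigr)\phi(\zeta)\,d\sigma(\zeta).$$
Testing against the unit vector $v=u(x)/|u(x)|$ and applying H\"older's inequality with conjugate exponents $(p,q)$ gives $|u(x)|\le \|P_{h}(x,\cdot)-a\|_{L^{q}(\mathbb{S}^{n-1})}\|\phi\|_{L^{p}}$. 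Since $P_{h}(x,\zeta)$ depends only on $|x|$ and $|x-\zeta|$, an orthogonal change of variables lets us replace $x$ by $|x|e_{n}$ without changing the $L^{q}$-norm; taking the infimum over $a\in[0,\infty)$ then reproduces exactly $G_{p}(|x|)\|\phi\|_{L^{p}}$, which proves \eqref{eq-1.3}.

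For \eqref{eq-1.4}, I would differentiate $P_{h}$ at the origin, using $\partial_{x_{i}}\log P_{h}(x,\zeta)=(n-1)\bigl[-2x_{i}/(1-|x|^{2})-2(x_{i}-\zeta_{i})/|x-\zeta|^{2}\bigr]$, which at $x=0$ yields $\partial_{x_{i}}P_{h}(0,\zeta)=2(n-1)\zeta_{i}$, so that
$$Du(0)\xi=2(n-1)\int_{\mathbb{S}^{n-1}}(\xi\cdot\zeta)\,\phi(\zeta)\,d\sigma(\zeta).$$
Pairing with an arbitrary unit vector $v\in\mathbb{R}^{n}$ and applying H\"older gives $|v\cdot Du(0)\xi|\le 2(n-1)\|\xi\cdot\zeta\|_{L^{q}(\mathbb{S}^{n-1})}\|\phi\|_{L^{p}}$, and maximizing over $v$ yields $|Du(0)\xi|$ on the left. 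The rotationally invariant scalar integral $\int_{\mathbb{S}^{n-1}}|\xi\cdot\zeta|^{q}d\sigma(\zeta)$ is evaluated in spherical coordinates via the Beta function as $\Gamma(n/2)\Gamma((1+q)/2)/[\sqrt{\pi}\,\Gamma((n+q)/2)]$, producing the constant in \eqref{eq-1.4}.

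Sharpness of \eqref{eq-1.3} for $p\in(1,\infty)$ is witnessed by $\phi(\zeta)=c\,|P_{h}(|x|e_{n},\zeta)-a^{*}|^{q-1}\sign(P_{h}(|x|e_{n},\zeta)-a^{*})v_{0}$, where $a^{*}$ is the minimizer in \eqref{eq-1.2} and $v_{0}$ is any unit vector: the first-order optimality condition for $a^{*}$ gives $\int\phi\,d\sigma=0$ automatically, and equality in H\"older holds by construction. For $p=\infty$ the extremal is $\phi=(\chi_{\mathbb{S}_{+}^{n-1}}-\chi_{\mathbb{S}_{-}^{n-1}})v_{0}$, which also identifies $G_{\infty}(r)$ with $U_{h}(re_{n})$: the best $L^{1}$-approximation of $P_{h}(re_{n},\cdot)$ by a constant is its median, which by monotonicity of the kernel in $\zeta_{n}$ corresponds to the equatorial value, and the residual integral unfolds into $P_{h}[\chi_{+}-\chi_{-}](re_{n})$; the case $p=1$ is handled by a sequence of functions approximating a signed dipole at $\pm e_{n}$. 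Sharpness of \eqref{eq-1.4} follows from $\phi(\zeta)=c|\xi\cdot\zeta|^{q-1}\sign(\xi\cdot\zeta)v_{0}$, which is automatically mean-zero by oddness. Finally, the diffeomorphism claims would be obtained by differentiating $G_{p}$ through the envelope theorem at the unique minimizing $a^{*}(r)$ to get strict monotonicity, combined with an analysis of the $r\to 1$ concentration of $P_{h}$ near $e_{n}$ (divergent for finite $p$, bounded by $1$ for $p=\infty$); I expect this boundary analysis, together with verification of strict monotonicity, to be the main technical obstacle.
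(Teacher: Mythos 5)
Your derivation of inequalities \eqref{eq-1.3} and \eqref{eq-1.4} (mean--zero rewriting, H\"older, rotational invariance of the kernel, $\nabla P_h(0,\zeta)=2(n-1)\zeta$, and the Beta--function evaluation of $\int|\xi\cdot\zeta|^q\,d\sigma$) is the same route the paper takes, and your sharpness witnesses -- $\phi=c\,|P_h-a^*|^{q-1}\sign(P_h-a^*)\,v_0$ using the first--order optimality condition to guarantee $\int\phi=0$, the characteristic--function extremal for $p=\infty$, the dipole sequence for $p=1$, and $\phi=c\,|\xi\cdot\zeta|^{q-1}\sign(\xi\cdot\zeta)\,v_0$ for the gradient bound -- all coincide with the paper's $\phi_x$ and $\phi^*$ (noting $q/p=q-1$). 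So the inequalities and their sharpness are essentially handled.

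Where you leave a genuine gap is precisely the part you flag as the main obstacle: the claim that $G_p$ is an increasing diffeomorphism of $[0,1)$ onto $[0,\infty)$ for $p\in(1,\infty)$. Two issues. First, the envelope--theorem strategy for strict monotonicity requires knowing the sign of $\partial_r\Phi_{q,r}(a^*(r))$, i.e.\ of $\int(P_h-a^*)|P_h-a^*|^{q-2}\,\partial_r P_h\,d\sigma$, and it is not at all obvious this is positive -- $\partial_r P_h$ changes sign on the sphere and does not align its sign with that of $P_h-a^*$. The paper sidesteps this entirely: it constructs the normalized extremal $u_*=P_h[\phi_*]$ with $\|\phi_*\|_{L^p}=1$, observes $G_p(r)=|u_*(re_n)|=\max_{|x|\le r}|u_*(x)|$, and invokes the maximum principle for hyperbolic harmonic functions to get strict increase -- a qualitatively different and more robust argument that you would need to discover. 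Second, you cannot even invoke an envelope theorem before establishing that the minimizer $a^*(r)$ exists, is unique, and varies smoothly in $r$; when $q\in(1,2)$ the integrand $|P_h-a|^{q-2}$ in $\partial_a\Phi_{q,r}$ has a nonintegrable--looking singularity along $\{P_h=a\}$, and the paper devotes three substantial lemmas to justifying differentiation under the integral sign, uniform convergence of the resulting improper integrals, and continuity of $\partial_aF$ and $\partial_rF$, before the implicit function theorem can be applied. Your proposal does not address this technical layer at all, and it is not a formality: it is what makes ``$G_p$ is smooth'' and ``$a^*(r)$ is a smooth function'' true rather than merely plausible.

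One further small point: the surjectivity of $G_p$ onto $[0,\infty)$ is obtained in the paper not from a direct kernel--concentration estimate but by choosing a $u\in\mathcal{H}^p\setminus\mathcal{H}^\infty$ and using \eqref{eq-1.3} together with $\|\phi\|_{L^p}=\|u\|_{\mathcal{H}^p}$, which is cleaner than the asymptotic analysis of $P_h$ near the boundary that you sketch.
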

\begin{remark}
\begin{enumerate}
\item It seems unlikely that we can explicitly express the function $G_p(r)$ for general $p$. However we demonstrate some special cases $p=1,2,\infty$ in Section~\ref{spc}.
\item Theorem \ref{thm-1.1} is generalization of
\cite[Theorem 1 and Corollary 2]{bur}.
\end{enumerate}

\end{remark}

\section{Proof of the main result}
The aim of this section is to prove the part of Theorem \ref{thm-1.1} when $p\in(1,\infty).$
In fact, it can be derived directly from Lemmas \ref{lem-2.5}$\sim$\ref{lem-2.8}.
Before the proofs of these lemmas, we need some preparation
which also consists of four lemmas.
The first one reads as follows.

\begin{lemma}\label{lem-2.1}
For $q\in[1,\infty)$, $r\in[0,1)$ and $a\in\mathbb{R}$,
\[\begin{split}
&\frac{\partial}{\partial a}\int_{\mathbb{S}^{n-1}} |P_h(re_{n},\eta)-a|^{q}\;d\sigma(\eta)
=\int_{\mathbb{S}^{n-1}}
q\big(a-P_h(re_{n},\eta)\big) |P_h(re_{n},\eta)-a|^{q-2}\;d\sigma(\eta).
\end{split}\]
\end{lemma}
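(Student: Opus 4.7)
The statement is essentially differentiation under the integral sign applied to $F(a)=\int_{\mathbb{S}^{n-1}}|P_h(re_n,\eta)-a|^q\,d\sigma(\eta)$, so the plan is to verify the hypotheses of the Leibniz rule. For each fixed $\eta$ with $P_h(re_n,\eta)\ne a$, the function $a\mapsto |P_h(re_n,\eta)-a|^q$ is differentiable with derivative $-q\sign(P_h(re_n,\eta)-a)|P_h(re_n,\eta)-a|^{q-1}$. Rewriting $-\sign(t-a)|t-a|^{q-1}=(a-t)|t-a|^{q-2}$ (valid for $t\ne a$) recovers exactly the integrand appearing on the right-hand side of the claim.

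To justify swapping $\partial_a$ with the integral via dominated convergence, one needs an integrable majorant for the family of difference quotients $\eta\mapsto(|P_h(re_n,\eta)-(a+h)|^q-|P_h(re_n,\eta)-a|^q)/h$ as $h\to 0$. Since $r<1$ forces $|re_n-\eta|\ge 1-r>0$, the kernel $t(\eta):=P_h(re_n,\eta)$ is bounded on $\mathbb{S}^{n-1}$ by a constant $M=M(r)$. For $q\ge 1$ the function $u\mapsto |u|^q$ is convex with $|(|u|^q)'|=q|u|^{q-1}$ monotone in $|u|$, whence the elementary estimate $\big||t-(a+h)|^q-|t-a|^q\big|\le q\max(|t-a|,|t-a-h|)^{q-1}|h|$ gives $\big|(|t-(a+h)|^q-|t-a|^q)/h\big|\le q(M+|a|+|h|)^{q-1}$, a constant independent of $\eta$ that dominates the difference quotients uniformly for, say, $|h|\le 1$, and is manifestly in $L^1(d\sigma)$.

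The only potentially delicate point is the apparent singularity of $|t-a|^{q-2}$ at $t=a$ when $1\le q<2$. It is illusory: the factor $(a-t)$ kills one power, so $(a-t)|t-a|^{q-2}=-\sign(t-a)|t-a|^{q-1}$ is bounded by $q(M+|a|)^{q-1}$, and the limit integrand is therefore in $L^1(d\sigma)$. Moreover, for $r>0$ the exceptional locus $\{\eta:P_h(re_n,\eta)=a\}$ reduces via $|re_n-\eta|^2=1+r^2-2r\eta_n$ to a slice $\{\eta_n=\const\}\cap\mathbb{S}^{n-1}$ and is $\sigma$-null, while for $r=0$ the integrand is constant in $\eta$ and the identity is trivial. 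Dominated convergence then yields the claim. The only genuine obstacle was exactly this apparent singularity of $|t-a|^{q-2}$ when $q<2$, which disappears once one combines it with the prefactor $(a-t)$.
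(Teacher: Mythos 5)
Your argument is correct, and it reaches the conclusion by a somewhat different and more uniform route than the paper's. The paper splits into two cases, $q\in[1,2)$ and $q\in[2,\infty)$: in the latter it observes that the integrand and its $a$-derivative are jointly continuous and invokes the standard continuity criterion, while in the former it bounds $\int_{\mathbb{S}^{n-1}}|P_h(re_n,\eta)-a|^{q-1}\,d\sigma(\eta)$ by $\bigl(\tfrac{1+r}{1-r}\bigr)^{(n-1)(q-1)}+|a|^{q-1}$ and appeals to an external differentiation-under-the-integral criterion (Kalaj--Pavlovi\'c, Prop.\ 2.4, or Talvila). You instead exploit the fact that $\eta\mapsto P_h(re_n,\eta)$ is bounded by a constant $M(r)$ once $r<1$ is fixed, and use the elementary Lipschitz-type inequality $\bigl||u_1|^q-|u_2|^q\bigr|\le q\max(|u_1|,|u_2|)^{q-1}\,|u_1-u_2|$ to dominate every difference quotient by the single constant $q(M+|a|+1)^{q-1}$; ordinary dominated convergence then handles all $q\ge1$ at once. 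Your treatment of the apparent singularity of $|t-a|^{q-2}$ for $q<2$ is exactly the right observation (it is absorbed by the prefactor $a-t$), and you correctly note that the exceptional set $\{\eta: P_h(re_n,\eta)=a\}$ is $\sigma$-null for $r>0$, which is all that is needed for the pointwise a.e.\ derivative. The net effect is a cleaner, self-contained proof that avoids both the case split and the auxiliary reference. One small remark applying equally to your argument and the paper's: the truly degenerate corner $q=1$, $r=0$, $a=1$ (where $a\mapsto|1-a|$ is not differentiable) is silently excluded by both; this is harmless since the lemma is only applied with $q>1$ in the sequel, but if one wants the statement literally as written one should restrict $a\ne 1$ when $r=0$ and $q=1$.
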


\begin{proof}
We consider the case when $q\in[1,2)$ and the case when $q\in [2,\infty)$, separately.

\begin{case}\label{case-2.1} $q\in[1,2)$. \end{case}
For $(r,a)\in[0,1)\times\mathbb{R}$,
since
$$
\int_{\mathbb{S}^{n-1}} \frac{\partial}{\partial a}
|P_h(re_{n},\eta)-a|^{q}\;d\sigma(\eta)
=\int_{\mathbb{S}^{n-1}}
q\big(a-P_h(re_{n},\eta)\big) |P_h(re_{n},\eta)-a|^{q-2}\;d\sigma(\eta)
$$
and
$$
 \int_{\mathbb{S}^{n-1}} |P_h(re_{n},\eta)-a|^{q-1}\;d\sigma(\eta)\leq
 \left( \frac{1+r}{1-r}\right)^{(n-1)(q-1)}+|a|^{q-1},
 $$
 then by \cite[Proposition 2.4]{kp} or \cite{ta}, we obtain that
 \[\begin{split}
 \frac{\partial}{\partial a}\int_{\mathbb{S}^{n-1}} |P_h(re_{n},\eta)-a|^{q}\;d\sigma(\eta)
=\int_{\mathbb{S}^{n-1}}
q\big(a-P_h(re_{n},\eta)\big) |P_h(re_{n},\eta)-a|^{q-2}\;d\sigma(\eta).
\end{split}\]
 \begin{case}\label{case-2.2} $p\in [2,\infty)$. \end{case}

By direct calculations, we have
$$
\frac{\partial}{\partial a}
 |P_h(re_{n},\eta)-a|^{q}
=q\big(a-P_h(re_{n},\eta)\big)|P_h(re_{n},\eta)-a|^{q-2}.
$$
Obviously, the mappings
$$
(r,a,\eta)\mapsto  |P_h(re_{n},\eta)-a|^{q}
\quad\text{and}\quad
(r,a,\eta)\mapsto\frac{\partial}{\partial a}|P_h(re_{n},\eta)-a|^{q}
$$
are continuous in $[0,1)\times \mathbb{R}\times\mathbb{S}^{n-1}$.
Therefore, for any $(r,a)\in[0,1)\times \mathbb{R}$,
\[\begin{split}
&\frac{\partial}{\partial a}\int_{\mathbb{S}^{n-1}} |P_h(re_{n},\eta)-a|^{q}\;d\sigma(\eta)
=\int_{\mathbb{S}^{n-1}}
\frac{\partial}{\partial a}|P_h(re_{n},\eta)-a|^{q}\;d\sigma(\eta).
\end{split}\]
as required.
The proof of the lemma is completed.
\end{proof}

For $q\in(1,\infty)$, $r\in(0,1)$ and $a\in \mathbb{R}$, let
\begin{equation}\label{eq2.01}
F(r,a)=\int_{\mathbb{S}^{n-1}} \big(P_h(re_{n},\eta)-a\big)|P_h(re_{n},\eta)-a|^{q-2} d\sigma(\eta).
\end{equation}
Then we have the following results on $F(r,a)$.
\begin{lemma}\label{lem-2.2}
For $q\in(1,\infty)$, $r\in(0,1)$ and $a\in \mathbb{R}$,
\begin{equation}\label{eq-2.2}
\partial_aF(r,a)=(1-q)\int_{\mathbb{S}^{n-1}} |P_h(re_{n},\eta)-a|^{q-2} d\sigma(\eta)
\end{equation}
and
\begin{equation}\label{eq-2.3}
\partial_rF(r,a)=(q-1)\int_{\mathbb{S}^{n-1}} \partial_{ r} P_h(re_{n},\eta)\cdot |P_h(re_{n},\eta)-a|^{q-2} d\sigma(\eta).
\end{equation}
Furthermore, for any $[\mu_{1},\mu_{2}]\subset (0,1)$ and $[\nu_{1},\nu_{2}]\subset (0,\infty)$,
both $\partial_aF(r,a)$ and $\partial_rF(r,a)$ are
uniformly convergent w.r.t. $(r,a)\in[\mu_{1},\mu_{2}]\times [\nu_{1},\nu_{2}]$.

\end{lemma}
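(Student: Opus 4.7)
I would establish both \eqref{eq-2.2} and \eqref{eq-2.3} by differentiating $F(r,a)$ under the integral sign. Writing $h(r,a,\eta)=(P_h(re_n,\eta)-a)|P_h(re_n,\eta)-a|^{q-2}$, a routine calculation (valid whenever $P_h(re_n,\eta)\ne a$, which excludes only a set of $\sigma$-measure zero on $\mathbb{S}^{n-1}$) gives
\[
\partial_a h = (1-q)|P_h(re_n,\eta)-a|^{q-2}, \qquad \partial_r h = (q-1)\,\partial_r P_h(re_n,\eta)\,|P_h(re_n,\eta)-a|^{q-2},
\]
which match the right-hand sides of the claimed identities. The remaining task is to justify the interchange $\partial\int=\int\partial$ and, simultaneously, the uniform-convergence statement; the natural tool is the Leibniz-type result already invoked in Lemma~\ref{lem-2.1} (see \cite[Proposition 2.4]{kp}), which reduces everything to producing an integrable majorant independent of the parameters.

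To produce such a majorant I would exploit that $P_h(re_n,\eta)$ depends on $\eta$ only through $t:=\eta\cdot e_n$, namely $\psi_r(t)=((1-r^2)/(1-2rt+r^2))^{n-1}$, and reduce the surface integral to a one-dimensional integral on $[-1,1]$ with the weight $(1-t^2)^{(n-3)/2}$. When $q\ge 2$, both $|\psi_r(t)-a|^{q-2}$ and $\partial_r\psi_r(t)\,|\psi_r(t)-a|^{q-2}$ are continuous and bounded on $[\mu_1,\mu_2]\times[\nu_1,\nu_2]\times[-1,1]$, so the Leibniz rule and uniform convergence follow at once.

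The genuine obstacle is the range $1<q<2$, where $|\psi_r(t)-a|^{q-2}$ is singular at any solution $t_0(r,a)=\psi_r^{-1}(a)\in[-1,1]$. I would control this by noting that, for $r\in[\mu_1,\mu_2]$ with $\mu_1>0$, $\psi_r$ is a smooth strictly increasing diffeomorphism of $[-1,1]$ onto its image and $\psi_r'(t)$ is bounded below by a positive constant $c=c(\mu_1,\mu_2)$; hence $|\psi_r(t)-a|\ge c\,|t-t_0(r,a)|$ uniformly in $(r,a)$ (when $t_0\in[-1,1]$; otherwise $|\psi_r(t)-a|$ is uniformly bounded below). Since $q-2>-1$ and $n\ge 3$, the function $|t-t_0|^{q-2}(1-t^2)^{(n-3)/2}$ is integrable on $[-1,1]$ with a bound independent of $t_0\in[-1,1]$, yielding a uniform integrable majorant; together with the boundedness of $\partial_r\psi_r$ on the compact parameter set, this also handles \eqref{eq-2.3}. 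The main technical point is therefore the uniform lower bound on $\psi_r'$, which is precisely why the hypothesis $\mu_1>0$, keeping $r$ away from the degenerate value $r=0$, is indispensable.
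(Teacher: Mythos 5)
Your proposal is correct, and it follows the paper's overall strategy: justify differentiation under the integral sign via an integrable majorant that is uniform over compact parameter rectangles, splitting into the easy case $q\geq 2$ (continuous, bounded integrand) and the singular case $1<q<2$. Where you differ from the paper is in how the majorant is produced for $1<q<2$. The paper passes to the one-dimensional integral, decomposes it over $[0,\pi/2]$ and $[\pi/2,\pi]$, and then uses the explicit algebraic factorization
$(1-r^{2})^{n-1}-\bigl(a^{1/(n-1)}(1+r^{2}\mp 2rx)\bigr)^{n-1}
= \bigl(1-r^{2}-a^{1/(n-1)}(1+r^{2}\mp 2rx)\bigr)\sum_{i}(\cdots)$,
bounding the cofactor sum below by constants $A(a,r)$, $B(a,r)$ to reduce the singularity to $|x-\lambda_{1,2}|^{q-2}$. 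You instead observe that $t\mapsto\psi_r(t)$ is a smooth, strictly increasing function of $t=\eta\cdot e_n$ with $\psi_r'\geq c(\mu_1,\mu_2)>0$ on $[-1,1]$, and invoke the mean value theorem to get $|\psi_r(t)-a|\geq c\,|t-t_0(r,a)|$ directly; this is the same linear lower bound but obtained in one stroke without the factorization or the split of the domain. Your route is cleaner and makes transparent exactly why $\mu_1>0$ (keeping $r$ away from $0$, where $\psi_r'\equiv 0$) is needed, which the paper's constants $A$, $B$ only encode implicitly. One small point worth spelling out if you write this up: for the mean value argument you should note, as you do in passing, that when $a>\psi_r(1)$ or $a<\psi_r(-1)$ the distance $|\psi_r(t)-a|$ is bounded below by a positive constant on the parameter rectangle, and when $a\leq 0$ the integrand is already bounded (this is the paper's inequality \eqref{eq-2.5}); then the case analysis is complete.
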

\begin{proof}
In order to prove this lemma, we only need to prove \eqref{eq-2.2} and the uniformly convergence of $\partial_aF(r,a)$ since \eqref{eq-2.3}  and the uniformly convergence of $\partial_rF(r,a)$ can be proved in a similar way.
For this, we consider the case when $q\in(1,2)$ and the case when $q\in [2,\infty)$, separately.

\begin{case}\label{case-2.3} $q\in(1,2)$. \end{case}
For fixed $r\in(0,1)$ and $\eta=(\eta_{1},\ldots,\eta_{n})\in\mathbb{S}^{n-1}$,
by calculations, we know that
\begin{eqnarray}\label{eq-2.4}
&\;\;&-\int_{\mathbb{S}^{n-1}} \frac{\partial}{\partial a}(P_h(re_{n},\eta)-a)|P_h(re_{n},\eta)-a|^{q-2}\;d\sigma(\eta)\\\nonumber
&=&(q-1)\int_{\mathbb{S}^{n-1}} |P_h(re_{n},\eta)-a|^{q-2} d\sigma(\eta)
\leq4^{n-1}I(r,a),
\end{eqnarray}
where
$$
I(r,a)=\int_{\mathbb{S}^{n-1}} \big|(1-r^{2})^{n-1}-a(1+r^{2}-2r\eta_{n})^{n-1} \big|^{q-2}\;d\sigma(\eta).
 $$

 If $a\leq0$, then
\begin{eqnarray}\label{eq-2.5}
I(r,a)&=&\int_{\mathbb{S}^{n-1}} \big((1-r^{2})^{n-1}+|a|(1+r^{2}-2r\eta_{n})^{n-1} \big)^{q-2}\;d\sigma(\eta)\\\nonumber
&\leq&\big( (1-r^{2})^{n-1}+|a|(1-r)^{2n-2} \big)^{q-2}.
\end{eqnarray}

 If $a>0$, by using the spherical coordinates
 (cf. \cite[Section 2.2]{chen2018}), we obtain
\begin{eqnarray}\label{eq-2.6}
I(r,a)&=&\int_{0}^{\pi} \sin^{n-2}\theta \left|(1-r^{2})^{n-1}-a ( 1+r^{2} -2r \cos\theta )^{n-1} \right|^{q-2}\;d\theta\\\nonumber
&=&I_{1}(r,a)+I_{2}(r,a),
\end{eqnarray}
where
 $$
 I_{1}(r,a)=\int_{0}^{\frac{\pi}{2}} \sin^{n-2}\theta \left|(1-r^{2})^{n-1}- a (1+r^{2} -2r \cos\theta )^{n-1} \right|^{q-2}\;d\theta
 $$
 and
  $$
 I_{2}(r,a)=\int_{\frac{\pi}{2}}^{\pi} \sin^{n-2}\theta \left|(1-r^{2})^{n-1}- a (1+r^{2} -2r \cos\theta )^{n-1} \right|^{q-2}\;d\theta.
 $$

In the following, we estimate $I_{1}(r,a)$ and $I_{2}(r,a)$, respectively.

\begin{claim}\label{claim-2.1} $I_{1}(r,a)$ is convergent in $(0,1)\times(0,\infty)$ and uniformly convergent in $[\mu_{1},\mu_{2}]\times [\nu_{1},\nu_{2}]$ for any $[\mu_{1},\mu_{2}]\subset (0,1)$ and $[\nu_{1},\nu_{2}]\subset (0,\infty)$.
\end{claim}

For $(r,a)\in(0,1)\times(0,\infty)$, obviously,
  \begin{eqnarray*}
 \;\;I_{1}(r,a)
 &=&\int_{0}^{1} (1-x^{2})^{\frac{n-3}{2}} \left|(1-r^{2})^{n-1}-\left(a^{\frac{1}{n-1}}(1+r^{2}-2rx) \right)^{n-1} \right|^{q-2}\;dx\\
&\leq&\int_{0}^{1}  \left|(1-r^{2})^{n-1}-\left(a^{\frac{1}{n-1}}(1+r^{2}-2rx) \right)^{n-1} \right|^{q-2}\;dx.
\end{eqnarray*}
 Moreover, for $x\in[0,1]$, $r\in(0,1)$ and $a\in(0,\infty)$,
\begin{eqnarray*}
&\;\;& \left|(1-r^{2})^{n-1}-\left(a^{\frac{1}{n-1}}(1+r^{2}-2rx) \right)^{n-1} \right|\\
&=&\left| 1-r^{2} -a^{\frac{1}{n-1}}(1+r^{2}-2rx) \right|\cdot\sum_{i=0}^{n-2}(1-r^{2})^{i}\left(a^{\frac{1}{n-1}}(1+r^{2}-2rx) \right)^{n-2-i}\\
&\geq&A(a,r)\cdot \left| 1-r^{2} -a^{\frac{1}{n-1}}(1+r^{2}-2rx) \right|,
\end{eqnarray*}
where $A(a,r)=(n-1)\min\{a^{\frac{n-2}{n-1}},1 \}(1-r)^{2n-4}$.
Then for any $(r,a)\in(0,1)\times(0,\infty)$, elementary calculations lead to
\begin{eqnarray}\label{eq-2.7}
&\;\;& I_{1}(r,a) \\\nonumber
&\leq&\frac{A^{q-2}(a,r)
}{2r(q-1)a^{\frac{1}{n-1}}} \left( \left| 1-r^{2} -a^{\frac{1}{n-1}}(1-r)^{2} \right|^{q-1}+\left| 1-r^{2} -a^{\frac{1}{n-1}}(1+r^{2}) \right|^{q-1}\right).
 \end{eqnarray}

 Let
$$
 1-r^{2} -a^{\frac{1}{n-1}}(1+r^{2}-2r\lambda_{1})=0.
$$
Then
\begin{eqnarray}\label{eq-2.8}
\lambda_{1}=\frac{1+r^{2}}{2r}-\frac{1-r^{2}}{2r}a^{\frac{1}{1-n}}.
\end{eqnarray}

For any $\delta>0$, $r\in(0,1)$ and $a>0$, since
\begin{eqnarray*}
&\;\;& A^{q-2}(a,r)\cdot\int_{\lambda_{1}-\delta}^{\lambda_{1}}  \left| 1-r^{2} -a^{\frac{1}{n-1}}(1+r^{2}-2rx) \right|^{q-2}dx\\
&\leq& A^{q-2}(a,r)\big(2ra^{\frac{1}{n-1}}  \big)^{q-2}
\int_{\lambda_{1}-\delta}^{\lambda_{1}}  |x-\lambda_{1} |^{q-2}dx
=A^{q-2}(a,r)\big(2ra^{\frac{1}{n-1}}  \big)^{q-2} \delta^{q-1}/(q-1)
\end{eqnarray*}
and
\begin{eqnarray*}
&\;\;& A^{q-2}(a,r)\cdot\int_{\lambda_{1}}^{\lambda_{1}+\delta}  \left| 1-r^{2} -a^{\frac{1}{n-1}}(1+r^{2}-2rx) \right|^{q-2}dx\\
&\leq& A^{q-2}(a,r)\big(2ra^{\frac{1}{n-1}}  \big)^{q-2}
\int_{\lambda_{1}}^{\lambda_{1}+\delta}  |x-\lambda_{1} |^{q-2}dx
=A^{q-2}(a,r)\big(2ra^{\frac{1}{n-1}}  \big)^{q-2} \delta^{q-1}/(q-1),
\end{eqnarray*}
we see that $I_{1}(r,a)$ is uniformly convergent w.r.t. $(r,a)\in[\mu_{1},\mu_{2}]\times [\nu_{1},\nu_{2}]$ for any $[\mu_{1},\mu_{2}]\subset (0,1)$ and $[\nu_{1},\nu_{2}]\subset (0,\infty)$.
Hence, Claim  \ref{claim-2.1} is proved.

\begin{claim}\label{claim-2.2} $I_{2}(r,a)$ is convergent in $(0,1)\times(0,\infty)$ and uniformly convergent in $[\mu_{1},\mu_{2}]\times [\nu_{1},\nu_{2}]$ for any $[\mu_{1},\mu_{2}]\subset (0,1)$ and $[\nu_{1},\nu_{2}]\subset (0,\infty)$.
\end{claim}

Similar arguments as in the proof of Claim \ref{claim-2.1} guarantee that
$$
 I_{2}(r,a)
\leq\int_{0}^{1}  \left|(1-r^{2})^{n-1}-\left(a^{\frac{1}{n-1}}(1+r^{2}+2rx) \right)^{n-1} \right|^{q-2}\;dx.
$$
 Moreover, for $x\in[0,1]$, $r\in(0,1)$ and $a\in(0,\infty)$,
$$
 \left|(1-r^{2})^{n-1}-\left(a^{\frac{1}{n-1}}(1+r^{2}+2rx) \right)^{n-1} \right|
 \geq B(a,r)\cdot \left| 1-r^{2} -a^{\frac{1}{n-1}}(1+r^{2}+2rx) \right|,
$$
where $B(a,r)=(n-1)\min\{a^{\frac{n-2}{n-1}},1 \}(1-r^{2})^{n-2}$.
Therefore, by elementary calculations, we obtain
\begin{eqnarray}\label{eq-2.9}
&\;\;&I_{2}(r,a)\\\nonumber
&\leq&\frac{B^{q-2}(a,r)
}{2r(q-1)a^{\frac{1}{n-1}}} \left( \left|1-r^{2}-a^{\frac{1}{n-1}}(1+r)^{2} \right|^{q-1}+\left|1-r^{2}-a^{\frac{1}{n-1}}(1+r^{2}) \right|^{q-1}\right).
\end{eqnarray}
for any $r\in(0,1)$ and $a\in(0,\infty)$.

Let
$$
 1-r^{2} -a^{\frac{1}{n-1}}(1+r^{2}+2r\lambda_{2})=0.
$$
Then
$$
\lambda_{2}=\frac{1-r^{2}}{2r}a^{\frac{1}{1-n}}-\frac{1+r^{2}}{2r}.
$$
For any $\delta>0$, $r\in(0,1)$ and $a>0$, since
\begin{eqnarray*}
&\;\;& B^{q-2}(a,r)\cdot\int_{\lambda_{2}-\delta}^{\lambda_{2}}  \left| 1-r^{2} -a^{\frac{1}{n-1}}(1+r^{2}+2rx) \right|^{q-2}dx\\
&\leq& B^{q-2}(a,r)\big(2ra^{\frac{1}{n-1}}  \big)^{q-2}
\int_{\lambda_{2}-\delta}^{\lambda_{2}}  |x-\lambda_{2} |^{q-2}dx
=B^{q-2}(a,r)\big(2ra^{\frac{1}{n-1}}  \big)^{q-2} \delta^{q-1}/(q-1)
\end{eqnarray*}
and
\begin{eqnarray*}
&\;\;& B^{q-2}(a,r)\cdot\int_{\lambda_{2}}^{\lambda_{2}+\delta}  \left| 1-r^{2} -a^{\frac{1}{n-1}}(1+r^{2}+2rx) \right|^{q-2}dx\\
&\leq& B^{q-2}(a,r)\big(2ra^{\frac{1}{n-1}}  \big)^{q-2}
\int_{\lambda_{2}}^{\lambda_{2}+\delta}  |x-\lambda_{2} |^{q-2}dx
=B^{q-2}(a,r)\big(2ra^{\frac{1}{n-1}}  \big)^{q-2} \delta^{q-1}/(q-1),
\end{eqnarray*}
we see that $I_{2}(r,a)$ is uniformly convergent w.r.t. $(r,a)\in[\mu_{1},\mu_{2}]\times [\nu_{1},\nu_{2}]$ for any $[\mu_{1},\mu_{2}]\subset (0,1)$ and $[\nu_{1},\nu_{2}]\subset (0,\infty)$.
 Hence, Claim  \ref{claim-2.2} is proved.

 \medskip

Now, we prove \eqref{eq-2.2}.
For any $r\in(0,1)$ and $a\in(0,\infty)$,
by \eqref{eq-2.6}, \eqref{eq-2.7} and \eqref{eq-2.9}, we get
\begin{eqnarray*}
I(r,a)
&\leq& I_{1}(r,a)+ I_{2}(r,a)\\
&\leq&\frac{2\big(A^{q-2}(a,r)+ B^{q-2}(a,r)\big)}
{(q-1)a^{\frac{1}{n-1}}r} \left( (1-r^{2})^{q-1}+a^{\frac{q-1}{n-1}}(1+r)^{2q-2}\right).
\end{eqnarray*}
From this, together with \eqref{eq-2.5} and \cite[Proposition 2.4]{kp} or \cite{ta},
we see that \eqref{eq-2.2} is true.
By \eqref{eq-2.2}, \eqref{eq-2.4}, Claims \ref{claim-2.1} and \ref{claim-2.2}, we obtain that $\partial_aF(r,a)$ is
uniformly convergent w.r.t. $(r,a)\in[\mu_{1},\mu_{2}]\times [\nu_{1},\nu_{2}]$ for any $[\mu_{1},\mu_{2}]\subset (0,1)$ and $[\nu_{1},\nu_{2}]\subset (0,\infty)$.

 \begin{case}\label{case-2.4} $p\in [2,\infty)$. \end{case}

By direct calculations, we have
$$
\frac{\partial}{\partial a}
(P_h(re_{n},\eta)-a)|P_h(re_{n},\eta)-a|^{q-2}
=(1-q) |P_h(re_{n},\eta)-a|^{q-2}.
$$
Obviously, the mappings
$$
(r,a,\eta)\mapsto  (P_h(re_{n},\eta)-a)|P_h(re_{n},\eta)-a|^{q-2}
$$
and
$$
(r,a,\eta)\mapsto(1-q) |P_h(re_{n},\eta)-a|^{q-2}
$$
are continuous in $(0,1)\times\mathbb{R}\times\mathbb{S}^{n-1}$.
Therefore, for any $(r,a)\in (0,1)\times\mathbb{R}$,
\eqref{eq-2.2} is true and  $\partial_aF(r,a)$ is
uniformly convergent w.r.t. $(r,a)\in[\mu_{1},\mu_{2}]\times [\nu_{1},\nu_{2}]$ for any $[\mu_{1},\mu_{2}]\subset (0,1)$ and $[\nu_{1},\nu_{2}]\subset (0,\infty)$.
The proof of the lemma is completed.
\end{proof}

\begin{lemma}\label{lem-2.3}
For $q\in(1,\infty)$,
 both $\partial_aF(r,a)$ and
$\partial_rF(r,a)$ are continuous
w.r.t. $(r,a)\in(0,1)\times(0,\infty)$.
\end{lemma}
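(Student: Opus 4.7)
My plan is to derive continuity of $\partial_aF$ and $\partial_rF$ from the explicit integral representations \eqref{eq-2.2} and \eqref{eq-2.3}, combined with the uniform convergence already established in Lemma~\ref{lem-2.2}. Following the same dichotomy used there, I would treat $q\in[2,\infty)$ and $q\in(1,2)$ separately, since the integrand $|P_h(re_n,\eta)-a|^{q-2}$ behaves qualitatively differently in the two regimes.

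For $q\in[2,\infty)$ the exponent $q-2\ge 0$, so the integrand carries no singularity: both $|P_h(re_n,\eta)-a|^{q-2}$ and $\partial_r P_h(re_n,\eta)\,|P_h(re_n,\eta)-a|^{q-2}$ are jointly continuous on $(0,1)\times(0,\infty)\times\mathbb{S}^{n-1}$, and on any compact product $[\mu_1,\mu_2]\times[\nu_1,\nu_2]\times\mathbb{S}^{n-1}$ they are uniformly bounded. The standard continuity-under-the-integral-sign theorem (or dominated convergence) then yields continuity of $\partial_aF$ and $\partial_rF$ on $(0,1)\times(0,\infty)$.

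The substantive case is $q\in(1,2)$. Here the integrand blows up on the parallel $\{\eta_n=\kappa(r,a)\}$, where $\kappa(r,a)$ is the unique value of $\eta_n$ solving $P_h(re_n,\eta)=a$ (a continuous function of $(r,a)$; compare $\lambda_1,\lambda_2$ in the proof of Lemma~\ref{lem-2.2}). Fix $(r_0,a_0)\in(0,1)\times(0,\infty)$ and a compact neighborhood $K=[\mu_1,\mu_2]\times[\nu_1,\nu_2]$ of it, and for $\rho>0$ set $B_\rho(r,a)=\{\eta\in\mathbb{S}^{n-1}:|\eta_n-\kappa(r,a)|<\rho\}$. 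The $\delta^{q-1}$-type bounds leading to \eqref{eq-2.7} and \eqref{eq-2.9} in the proof of Lemma~\ref{lem-2.2} show that, given $\varepsilon>0$, one can choose $\rho$ so small that the contribution to $\partial_aF(r,a)$ (and to $\partial_rF(r,a)$) from $B_\rho(r,a)$ is at most $\varepsilon$ in absolute value, uniformly for $(r,a)\in K$. By continuity of $\kappa$, after shrinking the neighborhood of $(r_0,a_0)$ one has $B_\rho(r,a)\subset B_{2\rho}(r_0,a_0)$; on the fixed complement $\mathbb{S}^{n-1}\setminus B_{2\rho}(r_0,a_0)$ the integrand is bounded and jointly continuous, so the integral over this set is a continuous function of $(r,a)$ by dominated convergence. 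Writing the full integral as this continuous piece plus a uniformly $\varepsilon$-small remainder yields continuity at $(r_0,a_0)$. The main obstacle is precisely that the singular set $\{\eta_n=\kappa(r,a)\}$ moves with $(r,a)$ when $q<2$, so dominated convergence cannot be applied globally; the remedy is the fixed-slab splitting above, combined with the uniform tail estimates already furnished by Lemma~\ref{lem-2.2}.
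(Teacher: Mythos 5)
Your proposal is correct and takes essentially the same approach as the paper: for $q\ge 2$ you invoke joint continuity, and for $q\in(1,2)$ you isolate a shrinking neighborhood of the (moving) singular parallel, bound its contribution uniformly via the $\delta^{q-1}$-type tail estimates from the proof of Lemma~\ref{lem-2.2}, and treat the fixed complement by joint continuity. The paper performs exactly the same split, working in the scalar coordinate $x$ after passing to spherical coordinates (excising a fixed interval about $\lambda_1(r_0,a_0)$) rather than with the slab $B_\rho\subset\mathbb{S}^{n-1}$, which is a purely cosmetic difference.
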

\begin{proof}
In order to prove this lemma, we only need to prove the continuity of $\partial_aF(r,a)$ since the continuity of $\partial_rF(r,a)$ can be proved in a similar way.
For this, we consider the case when $q\in(1,2)$ and the case when $q\in [2,\infty)$, separately.

\begin{case}\label{case-2.5} $q\in(1,2)$. \end{case}
In order to check the continuity of $\partial_aF(r,a)$, we only need to prove that $\partial_aF(r,a)$ is continuous at every fixed point
$(r_{0},a_{0})\in(0,1)\times(0,\infty)$.
Assume that
$(r_{0},a_{0})\in(\mu_{1},\mu_{2})\times (\nu_{1},\nu_{2})
\subset (0,1)\times(0,\infty)$
and $(r_{0}+\Delta r,a_{0}+\Delta a)
\in(\mu_{1},\mu_{2})\times (\nu_{1},\nu_{2})$.

For $(r,a,x)\in[\mu_{1},\mu_{2}]\times [\nu_{1},\nu_{2}]\times [-1,1]$, let
\begin{eqnarray}\label{eq-2.10}
\lambda_{3}(r,a,x)= \frac{(q-1)(1-x^2)^{\frac{n-3}{2}}\cdot(1+r^{2}-2rx)^{(n-1)(2-q)}}
{\left(\sum_{i=0}^{n-2}a^{\frac{i}{n-1}}(1-r^{2})^{n-2-i}(1+r^{2}-
2rx)^{i}\right)^{2-q}}
\end{eqnarray}
and
\begin{eqnarray}\label{eq-2.11}
\lambda_{4}(r,a,x)
=\big(2r a^{\frac{1}{n-1}}\big)^{(2-q)(n-1)}\big(x-\lambda_{1}\big)^{2-q},
\end{eqnarray}
where $\lambda_{1}=\lambda_{1}(r,a)$ is the constant from \eqref{eq-2.8}
and $\lambda_{1}(r,a)$ means that the constant $\lambda_{1}$ depends only on $r$ and $a$.
Obviously, $\lambda_{3}(r,a,x)$ is continuous in $[\mu_{1},\mu_{2}]\times [\nu_{1},\nu_{2}]\times [-1,1]$.
It follows from spherical coordinate transformation and \eqref{eq-2.2} that
$$
-\partial_aF(r,a)
=(q-1)\int_{\mathbb{S}^{n-1}} |P_h(re_{n},\eta)-a|^{q-2} d\sigma(\eta)
=J_{1}(r,a)+J_{2}(r,a),
 $$
 where
$$
J_{1}(r,a)
= \int_{0}^{1}
\frac{(q-1) (1-x^2)^{\frac{n-3}{2}}\cdot(1+r^{2}-2rx)^{(n-1)(2-q)}}
{|(1-r^{2})^{n-1}-(a^{\frac{1}{n-1}}(1+r^{2})-
2r a^{\frac{1}{n-1}}x)^{n-1}|^{2-q}} \;dx
=\int_{0}^{1}
\frac{\lambda_{3}(r,a,x)}
{\lambda_{4}(r,a,x)}\;dx
$$
and
$$
J_{2}(r,a)
=\int_{0}^{1}
\frac{(q-1)(1-x^2)^{\frac{n-3}{2}}\cdot(1+r^{2}+2rx)^{(n-1)(2-q)}}
{|(1-r^{2})^{n-1}-(a^{\frac{1}{n-1}}(1+r^{2})+
2r a^{\frac{1}{n-1}}x)^{n-1}|^{2-q}} \;dx
= \int_{0}^{1}
\frac{\lambda_{3}(r,a,-x)}
{\lambda_{4}(r,a,-x)}\;dx.
$$

By Claim \ref{claim-2.1}, we know that
$J_{1}(r,a)$ is
uniformly convergent in $[\mu_{1},\mu_{2}]\times [\nu_{1},\nu_{2}]$ for any $[\mu_{1},\mu_{2}]\subset (0,1)$ and $[\nu_{1},\nu_{2}]\subset (0,\infty)$.
Without loss of generalization, we assume that $\lambda_{1}(r_{0},a_{0})\in(0,1)$.
Then for any $\varepsilon_{1}>0$, there exist constants
$\iota_{1}= \iota_{1}(\varepsilon_{1})\rightarrow 0^{+}$ and
$\iota_{2}= \iota_{2}(\varepsilon_{1})\rightarrow 0^{+}$ such that for any $(r,a)\in[\mu_{1},\mu_{2}]\times [\nu_{1},\nu_{2}]$,
 $$
 \left|
\int_{\lambda_{1}(r_{0},a_{0})-\iota_{1}}^{\lambda_{1}(r_{0},a_{0})+\iota_{2}}
\frac{\lambda_{3}(r,a,x)}
{\lambda_{4}(r,a,x)}\;dx\right|<\varepsilon_{1}.
$$
 Then
\begin{eqnarray}\label{eq-2.12}
&\;\;& \left|J_{1}(r_{0}+\Delta r,a_{0}+\Delta a)-J_{1}(r_{0},a_{0})\right|\\\nonumber
&=&\left|\int_{0}^{\lambda_{1}(r_{0},a_{0})-\iota_{1}}
\left(\frac{\lambda_{1}(r_{0}+\Delta r,a_{0}+\Delta a,x)}
{\lambda_{2}(r_{0}+\Delta r,a_{0}+\Delta a,x)}-
\frac{\lambda_{1}(r_{0},a_{0},x)}
{\lambda_{2}(r_{0},a_{0},x)}\right)\;dx\right.\\\nonumber
&\;\;& +\int_{\lambda_{1}(r_{0},a_{0})-\iota_{1}}^{\lambda_{1}(r_{0},a_{0})+\iota_{2}}
\left(\frac{\lambda_{1}(r_{0}+\Delta r,a_{0}+\Delta a,x)}
{\lambda_{2}(r_{0}+\Delta r,a_{0}+\Delta a,x)}-\frac{\lambda_{1}(r_{0},a_{0},x)}
{\lambda_{2}(r_{0},a_{0},x)}\right)\;dx \\\nonumber
&\;\;&\left.+\int_{\lambda_{1}(r_{0},a_{0})+\iota_{2}}^{1}
\left(\frac{\lambda_{1}(r_{0}+\Delta r,a_{0}+\Delta a,x)}
{\lambda_{2}(r_{0}+\Delta r,a_{0}+\Delta a,x)}-\frac{\lambda_{1}(r_{0},a_{0},x)}
{\lambda_{2}(r_{0},a_{0},x)}\right)\;dx\right|\\\nonumber
&\leq&\left|\int_{0}^{\lambda_{1}(r_{0},a_{0})-\iota_{1}}
\left(\frac{\lambda_{1}(r_{0}+\Delta r,a_{0}+\Delta a,x)}
{\lambda_{2}(r_{0}+\Delta r,a_{0}+\Delta a,x)}-
\frac{\lambda_{1}(r_{0},a_{0},x)}
{\lambda_{2}(r_{0},a_{0},x)}\right)\;dx\right.\\\nonumber
&\;\;&\left.+\int_{\lambda_{1}(r_{0},a_{0})+\iota_{2}}^{1}
\left(\frac{\lambda_{1}(r_{0}+\Delta r,a_{0}+\Delta a,x)}
{\lambda_{2}(r_{0}+\Delta r,a_{0}+\Delta a,x)}-\frac{\lambda_{1}(r_{0},a_{0},x)}
{\lambda_{2}(r_{0},a_{0},x)}\right)\;dx\right|+ \varepsilon_{1}.
\end{eqnarray}

By \eqref{eq-2.8}, it is easy to see that $\lambda_{1}(r,a)$ is uniformly continuous in $[\mu_{1},\mu_{2}]\times [\nu_{1},\nu_{2}]$.
Then for any $\iota'\in(0,\min\{\iota_{1},\iota_{2}\})$, there exist a constant
$\iota_{3}= \iota_{3}(\iota')\rightarrow 0^{+}$ such that for any $(r,a)\in[r_{0}-\iota_{3},r_{0}+\iota_{3}]\times[a_{0}-\iota_{3},a_{0}+\iota_{3}]
\subset [\mu_{1},\mu_{2}]\times [\nu_{1},\nu_{2}]$,
$$
\lambda_{1}(r,a)
\in\left(\lambda_{1}(r_{0},a_{0})-\frac{\iota'}{2},\lambda_{1}(r_{0},a_{0})+\frac{\iota'}{2}\right)
\subset\big(\lambda_{1}(r_{0},a_{0})-\iota_{1},\lambda_{1}(r_{0},a_{0})+\iota_{2}\big).
$$
This, together with \eqref{eq-2.10} and \eqref{eq-2.11}, implies that
the mapping $(r,a,x)\mapsto \frac{\lambda_{3}(r,a,x)}
{\lambda_{4}(r,a,x)}$ is continuous (also uniformly continuous) in
$$[r_{0}- \iota_{3},r_{0}+ \iota_{3}]
\times  [a_{0}- \iota_{3},a_{0}+ \iota_{3}]
\times  [0,\lambda_{1}(r_{0},a_{0})-\iota_{1}]$$
and
$$[r_{0}- \iota_{3},r_{0}+ \iota_{3}]
\times  [a_{0}- \iota_{3},a_{0}+ \iota_{3}]
\times  [\lambda_{1}(r_{0},a_{0})+\iota_{2},1],$$
respectively.
Therefore, there exists $\iota_{4}=\iota_{4}(\varepsilon_{1})\leq\iota_{3}$
such that for all $|\Delta r|<\iota_{4}$, $|\Delta a|<\iota_{4}$
 and for all $x\in [0,\lambda_{1}(r_{0},a_{0})-\iota_{1}]\cup [\lambda_{1}(r_{0},a_{0})+\iota_{2},1]$,
\begin{eqnarray}\label{eq-2.13}
\left| \frac{\lambda_{3}(r_{0}+\Delta r,a_{0}+\Delta a,x)}
{\lambda_{4}(r_{0}+\Delta r,a_{0}+\Delta a,x)}-\frac{\lambda_{3}(r_{0},a_{0},x)}
{\lambda_{4}(r_{0},a_{0},x)} \right|<\varepsilon_{1} .
\end{eqnarray}
Then by \eqref{eq-2.12} and \eqref{eq-2.13}, we obtain
 $$
  \left|J_{1}(r_{0}+\Delta r,a_{0}+\Delta a)-J_{1}(r_{0},a_{0})\right|
  \leq 2\varepsilon_{1},
 $$
which means that $J_{1}$ is continuous at $(r_{0},a_{0})$.

\begin{case}\label{case-2.6} $q\in[2,\infty)$. \end{case}
Obviously, the mapping
$$(r,a,\eta)\mapsto(1-q) |P_h(re_{n},\eta)-a|^{q-2}$$
is continuous in $(0,1)\times(0,\infty)\times\mathbb{S}^{n-1}$.
Then by \eqref{eq-2.2}, we know that $\partial_aF(r,a)$ is continuous
w.r.t. $(r,a)\in(0,1)\times(0,\infty)$.
\end{proof}

For $q\in[1,\infty)$, $r\in[0,1)$ and $a\in \mathbb{R}$, define
\begin{equation}\label{eq-2.14}
 \Phi_{q,r}(a)=\left(\int_{\mathbb{S}^{n-1}} |P_h(re_{n},\eta)-a|^qd\sigma(\eta)\right)^{1/q}.
\end{equation}
 Then by Lemmas \ref{lem-2.1}$\sim$\ref{lem-2.3}, we obtain the following result for $\Phi_{q,r}(a)$.
\begin{lemma}\label{lem-2.4}
For $q\in(1,\infty)$ and $r\in[0,1)$,
there is a unique constant $a^{*}=a(r)\in(0,\infty)$ such that $$\Phi_{q,r}(a^*)=\min_{a\in \mathbb{R}}\Phi_{q,r}(a),$$
where $a(0)=1$ and
$a(r)$ is a smooth function in $(0,1)$.
\end{lemma}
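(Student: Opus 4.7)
The plan is to reduce the minimization problem to finding the unique zero of the function $F(r,a)$ introduced in \eqref{eq2.01}, and then read off smoothness from Lemma~\ref{lem-2.3} via the implicit function theorem.

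First I would observe that for $q\in(1,\infty)$ the map $a\mapsto|P_{h}(re_{n},\eta)-a|^{q}$ is strictly convex for each $\eta\in\mathbb{S}^{n-1}$, so $a\mapsto\int_{\mathbb{S}^{n-1}}|P_{h}(re_{n},\eta)-a|^{q}\,d\sigma(\eta)$ is strictly convex, and hence so is $\Phi_{q,r}$. Combined with the obvious coercivity $\Phi_{q,r}(a)\ge|a|-\sup_{\eta}P_{h}(re_{n},\eta)\to\infty$ as $|a|\to\infty$, this yields the existence and uniqueness of a global minimizer $a^{*}=a(r)\in\mathbb{R}$.

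Next I would characterize $a^{*}$ by the first-order condition. Applying Lemma~\ref{lem-2.1} to $\Phi_{q,r}^{q}$ and multiplying the resulting relation by $q^{-1}$ gives
\[
\tfrac{1}{q}\,\partial_{a}\bigl(\Phi_{q,r}(a)^{q}\bigr)=-F(r,a),
\]
so $a^{*}$ is characterized as the unique solution of $F(r,a)=0$. To see that $a^{*}>0$, I would evaluate
\[
F(r,0)=\int_{\mathbb{S}^{n-1}}P_{h}(re_{n},\eta)^{q-1}\,d\sigma(\eta)>0,
\]
and combine this with Lemma~\ref{lem-2.2}, which gives $\partial_{a}F(r,a)=(1-q)\int_{\mathbb{S}^{n-1}}|P_{h}(re_{n},\eta)-a|^{q-2}\,d\sigma(\eta)<0$ on $(0,1)\times(0,\infty)$. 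Strict monotonicity of $F(r,\cdot)$ then forces the unique zero $a(r)$ to lie in $(0,\infty)$, as required. The case $r=0$ is immediate: since $P_{h}(0,\eta)\equiv 1$, $\Phi_{q,0}(a)=|1-a|$, whose minimizer is $a=1$.

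For smoothness on $(0,1)$ I would invoke the implicit function theorem at any $r_{0}\in(0,1)$. Lemma~\ref{lem-2.2} (with Lemma~\ref{lem-2.3}) guarantees that both $\partial_{a}F$ and $\partial_{r}F$ are continuous at $(r_{0},a(r_{0}))\in(0,1)\times(0,\infty)$, and $\partial_{a}F(r_{0},a(r_{0}))<0\neq 0$; hence there is a $C^{1}$ solution curve $a(r)$ through $(r_{0},a(r_{0}))$, which by uniqueness coincides with our minimizer locally. Higher regularity follows by bootstrapping: for $q\ge 2$ the integrand in \eqref{eq2.01} is $C^{\infty}$ in $(r,a)$, so repeated differentiation under the integral sign (justified exactly as in Lemma~\ref{lem-2.2}) shows $F\in C^{\infty}$ and therefore $a(r)\in C^{\infty}$; for $q\in(1,2)$ one iterates the Claim~\ref{claim-2.1}--Claim~\ref{claim-2.2} type estimates on higher $a$- and $r$-derivatives, where the only singularity of the integrand is an integrable power at the zero set of $P_{h}(re_{n},\eta)-a(r)$.

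The main technical obstacle I anticipate is precisely this last step: extending the dominated-convergence/uniform-convergence machinery developed in Lemma~\ref{lem-2.2} from first derivatives to all orders when $1<q<2$, because the exponent $q-2$ is negative and the singularity becomes worse after each differentiation. The existence, uniqueness, positivity, value at $0$, and $C^{1}$ smoothness, by contrast, should follow cleanly from the preparatory lemmas and the strict convexity observation above.
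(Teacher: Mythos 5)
Your proposal is correct and follows essentially the same route as the paper: strict convexity plus coercivity (the paper uses $\Phi'_{q,r}(0)<0$ and $\lim_{a\to\infty}\Phi_{q,r}(a)=\infty$, you use $F(r,0)>0$ and $\partial_{a}F<0$, which are equivalent) to locate the unique minimizer in $(0,\infty)$ as the zero of $F$, followed by the implicit function theorem with the continuity of $\partial_{a}F$ and $\partial_{r}F$ from Lemmas~\ref{lem-2.2}--\ref{lem-2.3}. You are right to flag that the implicit function theorem with merely continuous partials yields only $C^{1}$ regularity, not $C^{\infty}$ — the paper's proof also stops at this point, invoking the implicit function theorem and the displayed formula for $a'(r)$ without carrying out the bootstrap you sketch, so your more cautious statement of the smoothness argument is a genuine improvement in rigor rather than a divergence in method.
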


\begin{proof}
When $r=0$, it follows from \eqref{eq-2.14} that $a^{*}=1$.
Hence, to prove the lemma, it remains to consider the case when $r\in(0,1)$.

For $r\in(0,1)$, $t\in(0,1)$ and $b,c\in\mathbb{R}$ with $b\not=c$, by Minkowski inequality, we obtain
$$\Phi_{q,r}\big(\lambda b+(1-t)c\big)
<t \Phi_{q,r}(b)+(1-t) \Phi_{q,r}(c),$$
which means that $\Phi_{q,r}(a)$ is strictly convex in $\mathbb{R}$.
Furthermore, by Lemma \ref{lem-2.1}, we know that for $q\in(1,\infty)$, $r\in(0,1)$ and $a\in \mathbb{R}$,
\begin{equation}\label{eq-2.15}
\frac{d}{da}\Phi_{q,r}(a)
=-\left(\int_{\mathbb{S}^{n-1}} |P_h(re_{n},\eta)-a|^{q}d\sigma(\eta)\right)^{1/q-1}F(r,a),
\end{equation}
where $F(r,a)$ is the mapping from \eqref{eq2.01}.
Therefore,
$$
\frac{d}{da}\Phi_{q,r}(0)=-\int_{\mathbb{S}^{n-1}} |P_h(re_{n},\eta)|^{q-1}d\sigma(\eta)
\left(\int_{\mathbb{S}^{n-1}}  |P_h(re_{n},\eta)|^{q}d\sigma(\eta)\right)^{1/q-1}<0.
$$
These, together with the fact $\lim_{a\rightarrow\infty}\Phi_{q,r}(a)=\infty$,
show that for any $r\in(0,1),$
$\Phi_{q,r}(a)$ has only one stationary point in $(0,\infty)$ which is its minimum, i.e., $a^{*}=a(r)$.

By \eqref{eq-2.15}, we see that for any $r\in(0,1)$, $\frac{d}{da}\Phi_{q,r}\big(a(r)\big)=0$ is equivalent to
\begin{equation}\label{eq-2.16}
F\big(r,a(r)\big)=0.
\end{equation}
Furthermore, Lemmas \ref{lem-2.2} and \ref{lem-2.3} tell us that both $\partial_aF(r,a)$ and $\partial_rF(r,a)$ are continuous w.r.t. $(r,a)\in(0,1)\times (0,\infty)$ and that $\partial_aF(r,a)<0$.
Therefore, it follows from \eqref{eq-2.16} and the implicit function theorem
that $a^*=a(r)$ is a smooth function w.r.t. $r\in(0,1)$ and
$$
\frac{d a(r)}{d r}
=-\frac{\frac{\partial F(r,a)}{\partial r}}
{\frac{\partial F(r,a)}{\partial a}}.
$$
The proof of the lemma is completed.
\end{proof}

Based on Lemma \ref{lem-2.4}, we have the following estimate on $|u|$.
 \begin{lemma}\label{lem-2.5}
For $p\in(1,\infty)$, suppose $u=P_{h}[\phi]$ and $u(0)=0$, where
$\phi\in L^p(\mathbb{S}^{n-1},\mathbb{R}^{n})$.
Then
\begin{equation}\label{eq-2.17}
|u(x)|\le G_p(|x|)\|\phi\|_{L^{p}}
\end{equation}
 in $\mathbb{B}^n$,
 where $G_p $ is the mapping from Theorem \ref{thm-1.1} and  it  is smooth in $(0,1)$ with $G_p(0)=0$ .
The inequality  is sharp.
 \end{lemma}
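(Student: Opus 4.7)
The plan is straightforward now that Lemma \ref{lem-2.4} is in hand: I would bound $|u(x)|$ by applying H\"older's inequality after using $u(0)=0$ to subtract an arbitrary constant from the Poisson--Szeg\"o kernel, and then minimize the bound in that constant by invoking Lemma \ref{lem-2.4}. Since $P_h(0,\zeta)\equiv 1$, the hypothesis $u(0)=0$ forces $\int_{\mathbb{S}^{n-1}}\phi\,d\sigma=0$; therefore, for every real $a$,
\[
u(x)=\int_{\mathbb{S}^{n-1}}\bigl(P_h(x,\zeta)-a\bigr)\phi(\zeta)\,d\sigma(\zeta),
\]
and the triangle inequality together with H\"older's inequality yields
\[
|u(x)|\le\Bigl(\int_{\mathbb{S}^{n-1}}|P_h(x,\zeta)-a|^{q}\,d\sigma(\zeta)\Bigr)^{1/q}\|\phi\|_{L^p}.
\]
Choosing an orthogonal map $R$ with $R(|x|e_n)=x$ and using $P_h(x,\zeta)=P_h(|x|e_n,R^{-1}\zeta)$ together with the rotational invariance of $d\sigma$, the integral becomes $\Phi_{q,|x|}(a)^q$.

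Next I would minimize over $a$. Because $P_h>0$, one has $\Phi_{q,|x|}(a)>\Phi_{q,|x|}(0)$ for every $a<0$, so $\inf_{a\in\mathbb{R}}\Phi_{q,|x|}(a)=\inf_{a\in[0,\infty)}\Phi_{q,|x|}(a)$. Lemma \ref{lem-2.4} identifies this infimum as $G_p(|x|)$, attained uniquely at $a^{*}=a(|x|)$, which establishes \eqref{eq-2.17}. Smoothness of $G_p$ on $(0,1)$ follows from smoothness of $r\mapsto a(r)$ (Lemma \ref{lem-2.4}) combined with the differentiation-under-the-integral statements in Lemmas \ref{lem-2.1}--\ref{lem-2.3}, while $G_p(0)=0$ because $\Phi_{q,0}(1)=0$.

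For sharpness, I would take $x=re_n$ (the general case reduces to this by rotation), fix any unit vector $v\in\mathbb{R}^n$, and set
\[
\phi(\zeta):=\sign\bigl(P_h(re_n,\zeta)-a^{*}\bigr)\,\bigl|P_h(re_n,\zeta)-a^{*}\bigr|^{q-1}\,v.
\]
The critical-point identity \eqref{eq-2.16} gives $\int\phi\,d\sigma=F(r,a^{*})\,v=0$, so the resulting $u=P_h[\phi]$ indeed satisfies $u(0)=0$. Because $p(q-1)=q$, one obtains $\|\phi\|_{L^p}^{p}=G_p(r)^{q}$, while $u(re_n)=\int(P_h-a^{*})\phi\,d\sigma=v\,G_p(r)^{q}$, whence $|u(re_n)|=G_p(r)\|\phi\|_{L^p}$. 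The heavy analytic work has been discharged by the preparatory Lemmas \ref{lem-2.1}--\ref{lem-2.4}; the only non-mechanical step remaining is recognizing that equality in H\"older reduces precisely to the normalization $F(r,a^{*})=0$, which is exactly the variational equation \eqref{eq-2.16}, so there is no real obstacle beyond the book-keeping already set up.
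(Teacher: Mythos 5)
Your proposal is correct and follows essentially the same route as the paper: subtract an arbitrary constant $a$ from the kernel (valid since $u(0)=0$), apply H\"older and a rotation to reduce to $x=re_n$, minimize over $a$ via Lemma~\ref{lem-2.4}, and construct the extremizer from $\phi=\sign(P_h-a^*)|P_h-a^*|^{q/p}$ using $F(r,a^*)=0$ to guarantee $u(0)=0$. The only cosmetic differences are that you make explicit the reduction from $\inf_{a\in\mathbb{R}}$ to $\inf_{a\in[0,\infty)}$ via $P_h>0$ (the paper leaves this implicit in Lemma~\ref{lem-2.4}), and you tensor the scalar extremizer with a unit vector $v$ to land in $\mathbb{R}^n$ rather than treating it as scalar-valued.
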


\begin{proof} Let $p\in(1,\infty)$ and $q$ be its conjugate.
For any $x\in \mathbb{B}^{n}$ and $a\in \mathbb{R}^{n}$, it follows from the assumption $u=P_{h}[\phi]$ and $u(0)=0$ that
$$
u(x)=\int_{\mathbb{S}^{n-1}} (P_{h}(x,\eta)-a)\phi(\eta)d\sigma(\eta).
$$

If $x=re_{n}$ for some $r\in[0,1)$, then by using H\"older's inequality, we have
\begin{equation}\label{eq-2.18}
|u(x)|\leq \left(\int_{\mathbb{S}^{n-1}} |P_h(re_{n},\eta)-a|^qd\sigma(\eta)\right)^{1/q}\|\phi\|_{L^{p}}= \Phi_{q,r}(a)\cdot\|\phi\|_{L^{p}}
\end{equation}
for any $a\in \mathbb{R}$, where $\Phi_{q,r}(a)$ is the mapping from \eqref{eq-2.14}.

If for any $r\in[0,1)$, $x\not=re_{n}$,
then we choose a unitary transformation $A$ such that $A(|x|e_{n})=x$.
For $y\in\mathbb{B}^{n}$, let $W(y)=:u(A(y))$.
Then by \cite[Theorem 5.3.5]{sto2016}, we have
$$
W=P_h[\phi]\circ A=P_h[\phi\circ A].
$$
Since $\|\phi\circ A\|_{L^{p}}=\|\phi\|_{L^{p}}$,
by replacing $u$ with $u\circ A$ and replacing $\phi$ with $\phi\circ A$, respectively,
the similar reasoning as above shows that \eqref{eq-2.18} holds true.

Further, for $r\in[0,1)$ and $q\in(1,\infty)$,
by \eqref{eq-1.2}, \eqref{eq-2.14} and Lemma \ref{lem-2.4}, we get
$$\min_{a\in \mathbb{R}}\Phi_{q,r}(a)
=\Phi_{q,r}(a^*)
=\Phi_{q,r}\big(a(r)\big)=G_p(r).
$$
This, together with \eqref{eq-2.18},
implies that \eqref{eq-2.17} holds true.
Further, by Lemma \ref{lem-2.4} and \eqref{eq-2.15}, we know that $G_p(0)=0$ and $G_p(r)$ is smooth in $(0,1)$.

Now, we show that the inequality \eqref{eq-2.17} is sharp.
Since $u(0)=0$ and $G_p(0)=0$,
then the equality in  \eqref{eq-2.17} holds for $x=0$.
If $x\in\mathbb{B}^{n}\backslash \{0\}$,
let
\begin{equation}\label{eq-2.19}
\phi_{x}(\eta)
=|P_h(x, \eta)-a(|x|)|^{q/p}
\mathrm{sign}\big( P_h(x,\eta)-a(|x|) \big)
\end{equation}
in $\mathbb{S}^{n-1}$
and define
$$
 u_{x}(y)=P_{h}[\phi_{x}](y)
 $$
 in $\mathbb{B}^{n}$.
It follows from \eqref{eq2.01} and \eqref{eq-2.16} that for any $\rho=|x|\in(0,1)$,
 $$ F\big(\rho,a(\rho)\big)
  =\int_{\mathbb{S}^{n-1}}
  \big(P_h(x,\eta)-a(\rho) \big)
  |P_h(x,\eta)-a(\rho)|^{q-2} d\sigma(\eta)=0.
$$
Therefore, $u_{x}(0)=P_{h}[\phi_{x}](0)=F\big(\rho,a(\rho)\big)=0$,
and so, for  any $y\in \mathbb{B}^{n}$,
$$
u_{x}(y)
=P_{h}[\phi_{x}](y)
=\int_{\mathbb{S}^{n-1}} \big(P_h(y,\eta)-a\big)\phi_{x}(\eta) d\sigma(\eta).
$$
Let $y=x$.
Then by spherical coordinate transformation \eqref{eq-1.2} and \eqref{eq-2.19}, we get
\begin{eqnarray*}\label{eq-2.022}
u_{x}(x)&=&\int_{\mathbb{S}^{n-1}} |P_h(x,\eta)-a(\rho)|^{q} d\sigma(\eta)
=\int_{\mathbb{S}^{n-1}} |P_h(\rho e_{n},\eta)-a(\rho)|^{q} d\sigma(\eta)\\
&= &\left(\int_{\mathbb{S}^{n-1}} |P_h(\rho e_{n},\eta)-a(\rho)|^qd\sigma(\eta)\right)^{1/q}\|\phi_{x}\|_{L^{p}}
=G_p(|x|)\|\phi_{x}\|_{L^{p}},
\end{eqnarray*}
which means that \eqref{eq-2.17} is an equality
for $u_{x}$ at $x$.
The sharpness of inequality \eqref{eq-2.17} follows.
\end{proof}

\begin{lemma}\label{lem-2.6}
For $p\in(1,\infty]$, suppose $u=P_{h}[\phi]$ and $u(0)=0$, where
$\phi\in L^p(\mathbb{S}^{n-1},\mathbb{R}^{n})$.
Then
\begin{equation}\label{eq-2.20}
 \|D u(0)\|\le 2(n-1) \alpha_{q}^{\frac{1}{q}}\|\phi\|_{L^{p}},
\end{equation}
where $\alpha_{q}=\frac{\Gamma\left[\frac{n}{2}\right] \Gamma\left[\frac{1+q}{2}\right]}{\sqrt \pi\Gamma\left[\frac{n+q}{2}\right]}$.
The inequality is sharp.
 \end{lemma}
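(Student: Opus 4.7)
The plan is to differentiate $u=P_h[\phi]$ under the integral sign at the origin, reduce the estimate of $\|Du(0)\|$ to an $L^q$-norm of a linear functional on $\mathbb{S}^{n-1}$, and then compute this $L^q$-norm explicitly by rotational invariance and a beta-function identity.

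\textbf{Step 1: Computing the gradient of $P_h$ at the origin.} Writing $P_h(x,\zeta)=(1-|x|^2)^{n-1}|x-\zeta|^{-2(n-1)}$ and differentiating, one immediately checks that the $(1-|x|^2)^{n-1}$ factor contributes nothing at $x=0$, so
\[
\frac{\partial P_h}{\partial x_i}(0,\zeta)=2(n-1)\zeta_i,\qquad i=1,\ldots,n.
\]
Since $P_h$ and its $x$-derivatives are smooth and uniformly bounded in $\zeta\in\mathbb{S}^{n-1}$ for $x$ in a neighborhood of $0$, differentiation under the integral sign gives
\[
\frac{\partial u_j}{\partial x_i}(0)=2(n-1)\int_{\mathbb{S}^{n-1}}\zeta_i\,\phi_j(\zeta)\,d\sigma(\zeta).
\]

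\textbf{Step 2: Reduction to a scalar Hölder estimate.} For any unit vector $\xi\in\mathbb{S}^{n-1}$,
\[
Du(0)\xi=2(n-1)\int_{\mathbb{S}^{n-1}}(\zeta\cdot\xi)\,\phi(\zeta)\,d\sigma(\zeta),
\]
and by duality in $\mathbb{R}^n$, $|Du(0)\xi|=\sup_{|w|=1}(Du(0)\xi)\cdot w$. Since $|\phi(\zeta)\cdot w|\le|\phi(\zeta)|$ for $|w|=1$, Hölder's inequality yields
\[
|Du(0)\xi|\le 2(n-1)\,\|\phi\|_{L^p}\,\Bigl(\int_{\mathbb{S}^{n-1}}|\zeta\cdot\xi|^q\,d\sigma(\zeta)\Bigr)^{1/q},
\]
with the natural modification $\|\zeta\cdot\xi\|_{L^\infty}=1$ when $p=1$ (i.e.\ $q=\infty$), in which case one simply bounds $|Du(0)\xi|\le 2(n-1)\|\phi\|_{L^1}$, and this matches $2(n-1)\alpha_\infty^{1/\infty}=2(n-1)$ after interpreting $\alpha_\infty^{1/\infty}=1$.

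\textbf{Step 3: Evaluating the constant.} By rotational invariance of $\sigma$, the integral is independent of $\xi$, so I may take $\xi=e_n$. Using spherical coordinates and the standard Beta-function formula,
\[
\int_{\mathbb{S}^{n-1}}|\zeta_n|^q\,d\sigma(\zeta)=\frac{\sigma_{n-2}}{\sigma_{n-1}}\int_0^{\pi}|\cos\theta|^q\sin^{n-2}\theta\,d\theta=\frac{\Gamma(n/2)\,\Gamma((q+1)/2)}{\sqrt\pi\,\Gamma((n+q)/2)}=\alpha_q,
\]
where $\sigma_{k}=2\pi^{(k+1)/2}/\Gamma((k+1)/2)$ is the (unnormalized) surface area of $\mathbb{S}^k$. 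Taking the supremum over $\xi\in\mathbb{S}^{n-1}$ yields \eqref{eq-2.20}.

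\textbf{Step 4: Sharpness.} For $p\in(1,\infty)$, define $\phi_\ast(\zeta)=\mathrm{sign}(\zeta_n)\,|\zeta_n|^{q/p}\,e_n$. By the odd symmetry $\zeta_n\mapsto-\zeta_n$, $\int\phi_\ast\,d\sigma=0$, so $u_\ast=P_h[\phi_\ast]$ satisfies $u_\ast(0)=0$. A direct computation using $1+q/p=q$ gives
\[
Du_\ast(0)\,e_n=2(n-1)\,e_n\int_{\mathbb{S}^{n-1}}|\zeta_n|^q\,d\sigma=2(n-1)\alpha_q\,e_n,\qquad \|\phi_\ast\|_{L^p}=\alpha_q^{1/p},
\]
so $\|Du_\ast(0)\|/\|\phi_\ast\|_{L^p}=2(n-1)\alpha_q^{1-1/p}=2(n-1)\alpha_q^{1/q}$. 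For $p=\infty$ the same role is played by $\phi_\ast(\zeta)=\mathrm{sign}(\zeta_n)\,e_n$. I expect the only mildly delicate point to be the vector-valued duality argument in Step 2 (ensuring that Hölder is applied to the scalar function $\phi\cdot w$ rather than to $|\phi|$), but this is a standard routine once one writes out the duality carefully.
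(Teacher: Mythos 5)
Your proposal follows the paper's proof almost exactly: differentiate the Poisson--Szeg\"o kernel at the origin to get $Du(0)\xi=2(n-1)\int\langle\eta,\xi\rangle\phi(\eta)\,d\sigma$, apply H\"older and rotational invariance to reduce to $\int|\eta_n|^q\,d\sigma=\alpha_q$, and exhibit $\phi(\eta)=\mathrm{sign}(\eta_n)|\eta_n|^{q/p}$ (times $e_n$) as the extremal boundary data. The only surplus is your aside about $p=1$, which lies outside the lemma's hypothesis $p\in(1,\infty]$ and can simply be dropped.
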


\begin{proof}
Let $\phi=(\phi_{1},\ldots,\phi_{n})$ and $u=(u_{1},\ldots,u_{n})$.
For $r\in[0,1)$ and $i\in\{1,\ldots,n\}$, the similar reasoning as in the proof of \cite[Lemma 4.3]{chen2018} shows the gradients
\[\begin{split}
\nabla u_{i}(re_{n})
=\int_{\mathbb{S}^{n-1}} \nabla P_{h}(re_{n},\eta)\phi_{i}(\eta) d\sigma(\eta),
\end{split}\]
where
$$
\nabla P_{h}(re_{n},\eta)
=\frac{2(n-1)(1-r^{2})^{n-2}\big((1-r^{2})(\eta-re_{n})-re_{n}|\eta-re_{n}|^{2}\big)}
{|\eta-re_{n}|^{2n}}.
$$
Then for any $\xi\in\mathbb{S}^{n-1}$, we have
$$
Du(0)\xi
=\big(\langle\nabla u_{1}(0),\xi\rangle,\ldots,\langle\nabla u_{n}(0),\xi\rangle\big)^{T}
=2(n-1)\int_{\mathbb{S}^{n-1}} \langle\eta,\xi\rangle\phi(\eta) d\sigma(\eta),
$$
where $T$ is the transpose. Since
\[\begin{split}
\max_{\xi\in\mathbb{S}^{n-1}}\left|\int_{\mathbb{S}^{n-1}} \langle\eta,\xi\rangle \phi(\eta) d\sigma(\eta)\right|
&\leq  \max_{\xi\in\mathbb{S}^{n-1}} \left(\int_{\mathbb{S}^{n-1}} |\langle\eta,\xi\rangle|^{q} d\sigma(\eta)\right)^{q} \|\phi\|_{L^{p}}\\
&= \left(\int_{\mathbb{S}^{n-1}}
| \eta_{n}|^{q}d\sigma(\eta)\right)^{q}
\|\phi \|_{L^{p}}\\
& =\alpha_{q}^{\frac{1}{q}}\|\phi\|_{L^{p}},
\end{split}\]
where $\eta=(\eta_{1},\ldots,\eta_{n})\in\mathbb{S}^{n-1}$.
Therefore,
$$
\|Du(0)\|=\max_{\xi\in\mathbb{S}^{n-1}}|Du(0)\xi|
\leq2(n-1)\alpha_{q}^{\frac{1}{q}}\|\phi\|_{L^{p}}.
$$

To prove the sharpness of inequality \eqref{eq-2.20},
for any $i\in\{1,\ldots,n\}$, let
$$
\phi(\eta)=|\eta_{i}|^{\frac{q}{p}} \cdot\mathrm{sign} (\eta_{i}).
$$
Then $u=P_{h}[\phi]$ is a mapping from $\mathbb{B}^{n}$ into $ \mathbb{R}$.
By using spherical coordinate transformation, we obtain
$$
u(0)=\int_{\mathbb{S}^{n-1}} \phi(\eta)d\sigma(\eta)
=\int_{\mathbb{S}^{n-1}} |\eta_{n}|^{\frac{q}{p}} \cdot \left( \chi_{\mathbb{S}_{+}^{n-1}} -\chi_{\mathbb{S}_{-}^{n-1}}
\right) d\sigma(\eta)=0
$$
and
\[\begin{split}
| \nabla u(0)|
&=2(n-1)\cdot\max_{\xi\in\mathbb{S}^{n-1}}\left|\int_{\mathbb{S}^{n-1}} \langle\eta,\xi\rangle \phi(\eta) d\sigma(\eta)\right|\\
&\geq 2(n-1) \left|\int_{\mathbb{S}^{n-1}} \langle\eta,e_{i}\rangle \phi(\eta) d\sigma(\eta)\right|
=2(n-1)\alpha_{q}^{\frac{1}{q}}\|\phi\|_{L^{p}}.
\end{split}\]
So the sharpness of \eqref{eq-2.20} follows.
\end{proof}

The following two results are some properties of $G_p$.
 \begin{lemma}\label{lem-2.7}
For $p\in(1,\infty)$,  $G_p:[0,1)\to [0,\infty)$  is a increasing diffeomorphism  with $G_p(0)=0$, where $G_p$ is the mapping from Theorem \ref{thm-1.1}
 \end{lemma}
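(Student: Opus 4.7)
\emph{Proof plan.} By Lemma~\ref{lem-2.5}, $G_p$ is smooth on $(0,1)$ with $G_p(0)=0$, so the plan is to establish three remaining properties: strict monotonicity, the limit $\lim_{r\to 1^-}G_p(r)=\infty$, and $G_p'(r)>0$ for $r\in(0,1)$. For monotonicity, I would first note that for any admissible $u=P_h[\phi]$ (with $u(0)=0$, $\|\phi\|_{L^p}\le 1$), the scalar function $|u|^2=\sum_j u_j^2$ is hyperbolic subharmonic because $\Delta_h(u_j^2)=2(1-|x|^2)^2|\nabla u_j|^2\ge 0$. The weak maximum principle for the uniformly elliptic operator $\Delta_h$ on $\overline{B(0,r)}$, together with the rotational invariance of the admissible class, yields
\[
G_p(r)=\sup\{|u(x)|:|x|\le r,\;u=P_h[\phi],\;u(0)=0,\;\|\phi\|_{L^p}\le 1\},
\]
which is manifestly non-decreasing in $r$.

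For strict monotonicity, my plan is a contradiction argument: suppose $G_p(r_1)=G_p(r_2)=:M>0$ for some $0<r_1<r_2<1$, and consider the normalized scalar extremal $v=P_h[\phi_{r_1 e_n}]/\|\phi_{r_1 e_n}\|_{L^p}$ from Lemma~\ref{lem-2.5}. Then $v(0)=0$, $|v(r_1 e_n)|=M$, and by \eqref{eq-2.17} combined with monotonicity, $|v(x)|\le G_p(|x|)\le M$ on $\overline{B(0,r_2)}$. Since $|v|$ attains its maximum $M$ at the interior point $r_1 e_n$, the strong maximum principle applied to $v$ (or to $-v$) forces $v$ to be constant on $\overline{B(0,r_2)}$, contradicting $v(0)=0\neq\pm M$.

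To show $G_p(r)\to\infty$, I would take $\phi=\chi_{C_\epsilon}-\chi_{-C_\epsilon}$ with $C_\epsilon=\{\eta\in\mathbb{S}^{n-1}:\eta_n>1-\epsilon\}$ for a small fixed $\epsilon>0$; then $\int\phi\,d\sigma=0$, $\|\phi\|_{L^p}$ is a fixed positive constant, and since $P_h(re_n,\eta)$ grows like $((1+r)/(1-r))^{n-1}$ uniformly for $\eta\in C_\epsilon$ while remaining bounded on $-C_\epsilon$, one has $|P_h[\phi](re_n)|\gtrsim((1+r)/(1-r))^{n-1}\to\infty$ as $r\to 1^-$, forcing $G_p(r)\to\infty$.

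Finally, for $G_p'(r_0)>0$, I would fix $r_0\in(0,1)$, set $u_*=P_h[\phi_{r_0 e_n}]/\|\phi_{r_0 e_n}\|_{L^p}$ (so that $u_*(r_0 e_n)=G_p(r_0)>0$), and invoke Hopf's boundary-point lemma: by \eqref{eq-2.17}, $u_*(x)\le G_p(|x|)\le G_p(r_0)$ on $\overline{B(0,r_0)}$ with equality at $r_0 e_n$, so the strong maximum principle gives strict inequality on $B(0,r_0)$, and Hopf's lemma for the uniformly elliptic operator $\Delta_h$ then yields $\partial_{e_n}u_*(r_0 e_n)>0$. To connect this to $G_p'$, I would differentiate the identity $G_p(r)^q=\int|P_h(re_n,\eta)-a(r)|^q\,d\sigma$, using the optimality relation $F(r,a(r))=0$ from Lemma~\ref{lem-2.4} to kill the $a'(r)$ term, obtaining
\[
G_p'(r_0)=G_p(r_0)^{1-q}\int_{\mathbb{S}^{n-1}}\partial_r P_h(r_0 e_n,\eta)\bigl(P_h(r_0 e_n,\eta)-a(r_0)\bigr)|P_h(r_0 e_n,\eta)-a(r_0)|^{q-2}\,d\sigma(\eta);
\]
inserting the explicit form of $\phi_{r_0 e_n}$ from \eqref{eq-2.19} and the identity $\|\phi_{r_0 e_n}\|_{L^p}=G_p(r_0)^{q/p}$ (with $q/p=q-1$) identifies this right-hand side with $\partial_{e_n}u_*(r_0 e_n)$, which is strictly positive. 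The hard part will be the clean algebraic identification between the envelope-theoretic derivative of $G_p$ and the radial derivative of the extremal $u_*$ at $r_0 e_n$; once that identification is in place, Hopf's lemma closes the argument and $G_p$ is a diffeomorphism.
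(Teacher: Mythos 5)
Your overall skeleton (monotonicity, strict monotonicity, limit at $1$, positivity of $G_p'$) is reasonable, and the strict-monotonicity-by-contradiction step via the strong maximum principle is a valid alternative to the paper's direct maximum-principle argument. The identification $G_p'(r_0)=\partial_{e_n}u_*(r_0e_n)$ that you flag as ``the hard part'' actually does go through cleanly: $\|\phi_{r_0e_n}\|_{L^p}=G_p(r_0)^{q/p}=G_p(r_0)^{q-1}$, and differentiating $G_p^q$ with the optimality relation $F(r,a(r))=0$ reproduces exactly $\int\partial_r P_h\cdot\phi_{r_0e_n}\,d\sigma$ up to that same normalization. Your Hopf-lemma step therefore does give $G_p'>0$, which is a genuine improvement: the paper only establishes that $G_p$ is smooth, strictly increasing, and onto, and never verifies that $G_p'$ is nowhere zero, which is what ``diffeomorphism'' actually requires.

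However, your argument that $G_p(r)\to\infty$ as $r\to1^-$ is incorrect. You take the fixed bounded datum $\phi=\chi_{C_\epsilon}-\chi_{-C_\epsilon}$; since $P_h(re_n,\cdot)$ is a probability density on $\mathbb{S}^{n-1}$, one has $|P_h[\phi](re_n)|\le\|\phi\|_{L^\infty}=1$ for every $r$, so this integral cannot blow up. The claim that $P_h(re_n,\eta)$ ``grows like $((1+r)/(1-r))^{n-1}$ uniformly for $\eta\in C_\epsilon$'' is false: that growth rate holds only at $\eta=e_n$, while for $\eta$ with $\eta_n$ near $1-\epsilon$ one has $|re_n-\eta|^2=(1-r)^2+2r(1-\eta_n)\ge 2r\epsilon$, so $P_h(re_n,\eta)\le\bigl((1-r^2)/(2r\epsilon)\bigr)^{n-1}\to0$ as $r\to1^-$. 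The kernel concentrates its mass in a shrinking cap; away from $e_n$ it decays. The paper's route avoids this: pick any $\phi\in L^p(\mathbb{S}^{n-1})\setminus L^\infty(\mathbb{S}^{n-1})$, normalize so $u=P_h[\phi]$ has $u(0)=0$; then $u\in\mathcal{H}^p\setminus\mathcal{H}^\infty$ is unbounded, and from $|u(x)|\le G_p(|x|)\|\phi\|_{L^p}$ one gets $\sup_{r<1}G_p(r)=\infty$, which together with monotonicity yields the desired limit. You need to replace your step with an argument of this type (or an explicit lower bound on $\inf_a\|P_h(re_n,\cdot)-a\|_{L^q}$ that exploits $q>1$).
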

\begin{proof}
By \eqref{eq-1.2}, it is easy to see that $G_p(0)=0$.
In order to prove
$\lim_{r\to 1^{-}}G_p(r)=\infty$,
we let $u\in \mathcal{H}^p(\mathbb{B}^n,\mathbb{R}^n)\setminus  \mathcal{H}^{\infty}(\mathbb{B}^n,\mathbb{R}^n)$.
Then \eqref{eq-1.1} yields that
$\phi\in L^p(\mathbb{S}^{n-1},\mathbb{R}^n)\setminus L^{\infty}(\mathbb{S}^{n-1},\mathbb{R}^n)$.
These, together with, Lemma \ref{lem-2.5}, imply that $G_p(r)$ is smooth in $(0,1)$ and
\begin{equation}\label{eq-2.21}
\sup_{r\in[0,1)}G_p(r)\ge \frac{\sup_{x\in \mathbb{B}^n}|u(x)|}{\|\phi\|_{L^{p}}}=\infty .
\end{equation}

\begin{claim}\label{claim-2.3}
For $p\in(1,\infty)$, $G_p$ is strictly increasing in [0,1).
\end{claim}
For $p\in(1,\infty)$, $r\in(0,1)$ and $\eta\in \mathbb{S}^{n-1}$,
define
$$
\phi_{*}(\eta)=\frac{|P_h(re_{n},\eta)-a(r)|^{q/p}
\mathrm{sign}(P_h(re_{n},\eta)-a(r))}{\left(\int_{\mathbb{S}^{n-1}}|P_h(re_{n},\eta)-a(r)|^{q} d\sigma(\eta)\right)^{\frac{1}{p}}}.
$$
Then
$\|\phi_{*}\|_{L^{p}}=1$.
For $y\in\mathbb{B}^n$, we let
$u_{*}(y)=P_{h}[\phi_{*}](y)$.
By replacing $u_{x}$ with $u_{*}$ and replacing $\phi_{x}$ with $\phi_{*}$, respectively,
the similar reasoning as in the proof of Lemma \ref{lem-2.5} shows that
$u_{*}(0)=0$ and
$G_p(r)=|u_{*}(re_{n})|$.
Further, \eqref{eq-1.3} and $\|\phi\|_{L^{p}}=1$ imply that $G_p(r)\geq\max_{|x|\leq r}|u_{*}(x)|$.
Therefore,
$$G_p(r)=\max_{|x|\leq r}|u_{*}(x)|=|u_{*}(re_{n})|.$$
Since $u_{*}(0)=0$, by calculations, we get
$$
u_{*}(re_{n})=\left(\int_{\mathbb{S}^{n-1}}|P_h(re_{n},\eta)-a(r)|^{q} d\sigma(\eta)\right)^{\frac{1}{q}}.
$$
Obviously,  $u_{*}$ is not a constant function in any disk $\mathbb{B}_{\rho}^n=\{x\in\mathbb{B}^n:|x|<\rho\}$, where $\rho\in(0,1)$.
Therefore, the maximum principle (cf. \cite[Theorem 4.4.2(a)]{sto2016}) implies that for any $0<r<s<1$,
$$
|u_{*}(0)|=G_p(0)<G_p(r)=\max_{|x|\leq r}|u_{*}(x)|<\max_{|x|\leq s}| u_{*}(x)|=G_p(s).
$$
Hence $G_p$ is a strictly increasing function in [0,1).
This, together with \eqref{eq-2.21}, implies that
$\lim_{r\to 1}G_p(r)=\infty$.
The proof of the lemma is completed.
\end{proof}

\begin{lemma}\label{lem-2.8}
For $p\in(1,\infty )$ and $r\in[0,1)$,
$G_{p}(r)$ is derivable at $r=0$ and
$$
G'_p(0)
 =2(n-1)\alpha_{q}^{\frac{1}{q}},
$$
where $\alpha_{q}$ is the constant from Lemma \ref{lem-2.6}.
 \end{lemma}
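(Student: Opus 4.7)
The plan is to prove $G_p'(0) = 2(n-1)\alpha_q^{1/q}$ by computing $\lim_{r\to 0^+} G_p(r)/r$ as a squeeze between an upper bound obtained from a suboptimal choice of $a$ in \eqref{eq-1.2} and a lower bound coming from the extremal boundary function identified in Lemma~\ref{lem-2.6}. Both bounds will converge to $2(n-1)\alpha_q^{1/q}$, and since $G_p(0)=0$ by \eqref{eq-1.2}, the derivative follows.

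For the upper bound, I will exploit $G_p(r)\le \Phi_{q,r}(1)$ (admissible as $1\in[0,\infty)$) and Taylor expand $P_{h}(re_n,\eta)=\bigl((1-r^2)/(1+r^2-2r\eta_n)\bigr)^{n-1}$ at $r=0$. A direct computation yields $\partial_r P_{h}(re_n,\eta)|_{r=0}=2(n-1)\eta_n$, so $P_{h}(re_n,\eta)=1+2(n-1)r\eta_n+R(r,\eta)$ with $|R(r,\eta)|=O(r^2)$ uniformly in $\eta\in\mathbb{S}^{n-1}$ by smoothness of the Poisson--Szeg\"o kernel on the compact set $\{|x|\le 1/2\}\times\mathbb{S}^{n-1}$. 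Passing to the limit under the integral sign gives
\[
\lim_{r\to 0^+}\frac{\Phi_{q,r}(1)}{r}=\left(\int_{\mathbb{S}^{n-1}}\bigl(2(n-1)|\eta_n|\bigr)^q\,d\sigma(\eta)\right)^{1/q}=2(n-1)\alpha_q^{1/q},
\]
so $\limsup_{r\to 0^+} G_p(r)/r\le 2(n-1)\alpha_q^{1/q}$.

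For the lower bound, I will reuse the extremizer from the proof of Lemma~\ref{lem-2.6}: set $\phi^*(\eta)=|\eta_n|^{q/p}\,\mathrm{sign}(\eta_n)$ and $u^*=P_{h}[\phi^*]$. By antisymmetry under $\eta_n\mapsto-\eta_n$, $u^*(0)=0$; by direct integration $\|\phi^*\|_{L^p}=\alpha_q^{1/p}$; and from the gradient formula in Lemma~\ref{lem-2.6} the $n$-th partial at the origin equals $2(n-1)\int_{\mathbb{S}^{n-1}}|\eta_n|^{q/p+1}\,d\sigma=2(n-1)\alpha_q$ (using $q/p+1=q$), whence $|u^*(re_n)|/r\to 2(n-1)\alpha_q$ as $r\to 0^+$. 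Applying \eqref{eq-2.17} to $u^*$ at $x=re_n$ yields $|u^*(re_n)|\le G_p(r)\alpha_q^{1/p}$, and dividing by $r$ and sending $r\to 0^+$ gives
\[
\liminf_{r\to 0^+}\frac{G_p(r)}{r}\ge\frac{2(n-1)\alpha_q}{\alpha_q^{1/p}}=2(n-1)\alpha_q^{1/q}.
\]
Combining the two bounds with $G_p(0)=0$ produces the claimed value of $G_p'(0)$. The only technical subtlety is uniformity of the Taylor remainder for $P_{h}(re_n,\eta)-1$, which is immediate from smoothness on compact sets; beyond that, the proof is essentially a clean assembly of Lemmas~\ref{lem-2.5} and~\ref{lem-2.6}, so I expect no real obstacle.
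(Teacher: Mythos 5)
Your proof is correct, and the upper bound (the $\limsup$ direction) is handled by a genuinely different and cleaner route than the paper's. For the lower bound you proceed exactly as the paper does: take the extremizer $\phi^*(\eta)=|\eta_n|^{q/p}\operatorname{sign}(\eta_n)$ from the sharpness part of Lemma~\ref{lem-2.6}, compute $u^*(0)=0$, $\|\phi^*\|_{L^p}=\alpha_q^{1/p}$, $\partial_n u^*(0)=2(n-1)\alpha_q$, and then invoke \eqref{eq-2.17} to force $\liminf_{r\to0^+}G_p(r)/r\ge 2(n-1)\alpha_q^{1/q}$. For the upper bound, however, the paper instead builds the one-parameter family $\phi^*_x$ (normalized so $\|\phi^*_x\|_{L^p}=1$ and $u^*_x(x)=G_p(r)$) from the sharpness of Lemma~\ref{lem-2.5}, writes $G_p(r)=|\langle\nabla u^*_x(0),x\rangle+o(x)|$, and then applies Lemma~\ref{lem-2.6}'s bound $|\nabla u^*_x(0)|\le 2(n-1)\alpha_q^{1/q}$ to pass to the $\limsup$; this requires (at least implicitly) some uniformity of the $o(x)$ remainder across the $x$-dependent family $u^*_x$. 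You sidestep all of that by simply noting $G_p(r)\le\Phi_{q,r}(1)$ (since $a=1$ is admissible in \eqref{eq-1.2}), Taylor-expanding the Poisson--Szeg\"o kernel $P_h(re_n,\eta)=1+2(n-1)r\eta_n+O(r^2)$ uniformly on $\{|x|\le\tfrac12\}\times\mathbb{S}^{n-1}$, and computing $\lim_{r\to0^+}\Phi_{q,r}(1)/r=2(n-1)\alpha_q^{1/q}$ by passing the uniform limit inside the integral. Your approach is more elementary and self-contained, avoids the $x$-dependent family of extremizers entirely, and in particular does not rely on Lemma~\ref{lem-2.7} (that $G_p$ is a diffeomorphism) which the paper's upper-bound argument quietly uses to set up $\phi^*_x$; the trade-off is that the paper's route illustrates the duality between the two sharp constants, while yours is the shorter path to the derivative.
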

\begin{proof}
Let $u=P_{h}[\phi]$, where $\phi \in L^p(\mathbb{S}^{n-1},\mathbb{R})$ and $u(0)=0$.
Then for any $x\in \mathbb{B}^{n}$,
\begin{equation}\label{eq-2.22}
u(x)=\nabla u(0)x+o(x),
\end{equation}
where $o(x)$ is a vector satisfying $\lim_{|x|\rightarrow0^{+}}\frac{|o(x)|}{|x|}=0$.
Then \eqref{eq-2.22}, together with the fact $u\in C^{2}(\mathbb{B}^{n},\mathbb{R})$,
implies that
$$
|\nabla u(0)|=|\langle\nabla u(0),\xi_{0}\rangle|
=\lim_{r\to 0^{+}}\frac{|u(r\xi_{0})|}{r},
$$
where $\xi_{0}=\frac{\nabla u(0)}{|\nabla u(0)|}$.
Further, by Lemma \ref{lem-2.5}, we know that for any $x\in \mathbb{B}^{n}$,
$$
|u(x)|\leq G_p(r)\|\phi\|_{L^{p}},
$$
where $r=|x|$ and $G_p(r)$ is a smooth mapping in $(0,1)$.
Then
\begin{equation}\label{eq-2.23}
|\nabla u(0)|
=\lim_{r\to 0^{+}}\frac{|u(r\xi_{0})|}{r}
=\liminf_{r\to 0^{+}}\frac{|u(r\xi_{0})|}{r}
\leq \liminf_{r\to 0^{+}}\frac{G_p(r)}{r}\|\phi\|_{L^{p}}.
\end{equation}
Let $q=\frac{p}{p-1}$, and for  $\eta=(\eta_{1},\ldots,\eta_{n})\in\mathbb{S}^{n-1}$, define
$$\phi^{*}(\eta)=\alpha_{q}^{-\frac{1}{p}}|\eta_{i}|^{\frac{q}{p}} \cdot\mathrm{sign} (\eta_{i}),
$$
where $i\in\{1,\ldots,n\}$.
Obviously, $\|\phi^{*}\|_{L^{p}}=1$.
By the similar reasoning as in the proof of Lemma \ref{lem-2.6}, we know that the mapping $u^{*}=P_{h}[\phi^{*}]$  satisfies $u^{*}(0)=0$ and
$$
|\nabla u^{*}(0)|=2(n-1) \alpha_{q}^{\frac{1}{q}}.
$$
This, together with \eqref{eq-2.23}, yields
\begin{equation}\label{eq-2.24}
2(n-1) \alpha_{q}^{\frac{1}{q}}
=|\nabla u^{*}(0)|\leq\liminf_{r\to 0^{+}}\frac{G_p(r)}{r},
\end{equation}

On the other hand, since Lemma \ref{lem-2.7} tells us that $G_p(r)$ is a increasing diffeomorphism in $(0,1)$ with $G_p(0)=0$,
then for $x\in\mathbb{B}^{n}\backslash\{0\}$ and $\eta\in\mathbb{S}^{n-1}$,
we define
$$
\phi^{*}_{x}(\eta)
=\big(G_{p}(r)\big)^{-\frac{q}{p}}\cdot|P_h(x,\eta)-a(r)|^{q/p}
\mathrm{sign}\big( P_h(x,\eta)-a(r) \big)
$$
where $r=|x|$.
Obviously, $\|\phi^{*}_{x}\|_{L^{p}}=1$.
By  the similar reasoning as in the proof of Lemma \ref{lem-2.5}, we know that
the mapping
$u^{*}_{x}(y)=P_{h}[\phi^{*}_{x}](y)$ satisfies $u^{*}_{x}(0)=0$ and
$$
u^{*}_{x}(x)=G_p(r).
$$
Then for $x\in\mathbb{B}^{n}\backslash\{0\}$,
by replacing $u$ with $u^{*}_{x}$ in \eqref{eq-2.22},
we obtain
\begin{equation}\label{eq-2.25}
 G_p(r)
=\big|u^{*}_{x}(x)\big|
=\big|\langle\nabla u^{*}_{x}(0),x\rangle+o(x)\big|.
\end{equation}
Further, for $x\in\mathbb{B}^{n}\backslash\{0\}$,
by Lemma \ref{lem-2.6}, we have
$$
 |\nabla u^{*}_{x}(0)|\leq 2(n-1) \alpha_{q}^{\frac{1}{q}}.
$$
This, together with \eqref{eq-2.25}, yields
\begin{eqnarray}\label{eq-2.26}
\limsup_{r\rightarrow0^{+}}\frac{G_p(r)}{r}
=\limsup_{r\rightarrow0^{+}}
\left|\left\langle\nabla u^{*}_{x}(0) ,\frac{x}{|x|}\right\rangle
\right|
\leq 2(n-1) \alpha_{q}^{\frac{1}{q}}.
\end{eqnarray}

By \eqref{eq-2.24} and \eqref{eq-2.26}, we have
$$
G'_p(0) =\lim_{r\rightarrow0^{+}}\frac{G_p(r)}{r}
=2(n-1) \alpha_{q}^{\frac{1}{q}}.
$$
The proof of the lemma is completed.
\end{proof}

\medskip

 As a corollary of our main result, we establish the following counterpart of \cite[Corollary 2.3]{kalaj2018}.
 \begin{corollary}\label{cor-2.1}
Suppose $u=P_{h}[\phi]$ with $\phi\in L^2(\mathbb{S}^{n-1},\mathbb{R}^{n})$.
Then
$$
 \|Du(0)\|\le \sqrt{2(n-1)}\sqrt{\|u\|^2_2-|u(0)|^2}.
$$
 \end{corollary}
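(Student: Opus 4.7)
The approach is a reduction to Lemma~\ref{lem-2.6} with $p=q=2$, after centering $u$ at $u(0)$. The crucial fact enabling this is $P_h(0,\zeta)\equiv 1$, which forces
\[
u(0)=\int_{\mathbb{S}^{n-1}}\phi(\zeta)\,d\sigma(\zeta);
\]
subtracting $u(0)$ from $\phi$ thus produces a boundary datum of spherical mean zero, which is exactly what is needed to convert the $\|\phi\|_{L^2}$ of Lemma~\ref{lem-2.6} into the quantity $\sqrt{\|u\|_2^2-|u(0)|^2}$ appearing in the corollary.

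Concretely, I would proceed as follows. First, set $c:=u(0)\in\mathbb{R}^n$, $\widetilde\phi:=\phi-c$, and $\widetilde u:=u-c$. Because constants are hyperbolic harmonic and reproduced by $P_h$, $\widetilde u=P_h[\widetilde\phi]$; moreover $\widetilde u(0)=0$ and $D\widetilde u(0)=Du(0)$. Second, apply Lemma~\ref{lem-2.6} to $\widetilde u$ with $p=2$. Using $\Gamma[3/2]=\sqrt{\pi}/2$ and $\Gamma[(n+2)/2]=(n/2)\Gamma[n/2]$ one finds $\alpha_2=1/n$, so that
\[
\|Du(0)\|=\|D\widetilde u(0)\|\le 2(n-1)\,\alpha_2^{1/2}\,\|\widetilde\phi\|_{L^2}.
\]
Third, the mean-zero condition $\int\widetilde\phi\,d\sigma=0$ combined with the $L^2$-orthogonality of $\widetilde\phi$ and the constant $c$ gives the Pythagorean identity
\[
\|\widetilde\phi\|_{L^2}^2=\|\phi\|_{L^2}^2-|c|^2.
\]
Finally, \eqref{eq-1.1} at $p=2$ yields $\|\phi\|_{L^2}=\|u\|_{\mathcal{H}^2}=\|u\|_2$, so that $\|\widetilde\phi\|_{L^2}^2=\|u\|_2^2-|u(0)|^2$; inserting this into the bound from the second step produces the announced inequality.

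There is no serious technical obstacle: granted Lemma~\ref{lem-2.6}, the proof is pure bookkeeping---a linear centering step followed by one $L^2$ orthogonality identity on the sphere. The only computational point worth flagging is the Gamma-function simplification that produces $\alpha_2=1/n$, since this is what fixes the explicit numerical constant in the final inequality.
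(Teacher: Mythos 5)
Your approach is exactly the paper's: center $u$ at $u(0)$, use that $P_h$ reproduces constants to get $\widetilde u = P_h[\widetilde\phi]$ with $\widetilde u(0)=0$ and $D\widetilde u(0)=Du(0)$, apply Lemma~\ref{lem-2.6} with $p=q=2$, and then use orthogonality on the sphere together with \eqref{eq-1.1} to rewrite $\|\widetilde\phi\|_{L^2}$ as $\sqrt{\|u\|_2^2-|u(0)|^2}$. All of those steps are sound.

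However, the last line of your argument does not actually close: you correctly compute $\alpha_2=1/n$, so Lemma~\ref{lem-2.6} gives
$$
\|Du(0)\|\le 2(n-1)\,\alpha_2^{1/2}\,\|\widetilde\phi\|_{L^2}=\frac{2(n-1)}{\sqrt n}\,\sqrt{\|u\|_2^2-|u(0)|^2},
$$
and $\dfrac{2(n-1)}{\sqrt n}\neq\sqrt{2(n-1)}$ once $n\ge 3$ (indeed $\frac{2(n-1)}{\sqrt n}>\sqrt{2(n-1)}$ precisely when $n>2$). So "inserting this into the bound from the second step" does \emph{not} produce the announced inequality; it produces the strictly weaker constant $2(n-1)/\sqrt n$. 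You should have flagged this mismatch rather than asserting the conclusion. In fact your careful evaluation of $\alpha_2$ exposes a slip in the paper itself: the published proof of the corollary silently writes $\sqrt{2(n-1)}\|\phi-u(0)\|_2$ where \eqref{eq-1.4} only yields $\frac{2(n-1)}{\sqrt n}\|\phi-u(0)\|_2$ (likely the authors carried over the constant $\sqrt n$ from the Euclidean-harmonic corollary by naively substituting $n\mapsto 2(n-1)$, when the correct substitution affects only the prefactor and not $\alpha_q$). So either the corollary's constant should read $\frac{2(n-1)}{\sqrt n}$, or some additional $p=2$-specific argument is needed to justify $\sqrt{2(n-1)}$, which neither you nor the paper supplies.
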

 \begin{proof}
 Let $w(x)=u(x)-u(0)$.
Then $w=P_{h}[\phi-u(0)]$ and
$$
\|\phi-u(0)\|_{2}^{2}
=\|\phi\|^2_2+|u(0)|^2-2\int_{\mathbb{S}^{n-1}}
\left\langle\phi(\eta),u(0)\right\rangle d\sigma(\eta)
=\|\phi\|^2_2-|u(0)|^2.
$$
Then by \eqref{eq-1.1} and \eqref{eq-1.4}, we get
$$
\|Du(0)\|=\|Dw(0)\|
\leq   \sqrt{2(n-1)}\|\phi-u(0)\|_2
 = \sqrt{2(n-1)}\sqrt{\|u\|_2^2-|u(0)|^2},
$$
as required.
 \end{proof}
 \section{Special cases}\label{spc}

\subsection{The case $p=\infty$}
For any $x\in \mathbb{B}^{n}$,
let $A$ be an unitary transformation such that $A(re_{n})=x$, where $r=|x|$.
Since $u(0)=0$ and $u=P_{h}[\phi]$, we have
$$
u(x)=u(A(re_{n}))
=\int_{\mathbb{S}^{n-1}} \left(P_h\big(A(re_{n}),\eta\big) - \frac{(1-r^{2})^{n-1}}{(1+r^{2})^{n-1}} \right)\phi(\eta)\;d\sigma(\eta).
$$
For any $\eta\in \mathbb{S}^{n-1}$, let $\xi=A^{-1}\eta$.
Then
\begin{eqnarray}\label{eq-3.1}
|u(x)|
&\leq& \|\phi\|_{L^{\infty}}\int_{\mathbb{S}^{n-1}} \left|P_h(re_{n},\xi) -\frac{(1-r^{2})^{n-1}}{(1+r^{2})^{n-1}} \right| \;d\sigma(\xi)\\\nonumber
&=&\|\phi\|_{L^{\infty}}\int_{\mathbb{S}_{+}^{n-1}} \left(P_h(re_{n},\xi) -\frac{(1-r^{2})^{n-1}}{(1+r^{2})^{n-1}} \right) \;d\sigma(\xi)\\\nonumber
&\;\;&+\|\phi\|_{L^{\infty}}\int_{\mathbb{S}_{-}^{n-1}} \left(\frac{(1-r^{2})^{n-1}}{(1+r^{2})^{n-1}}-P_h(re_{n},\xi) \right) \;d\sigma(\xi)\\\nonumber
&=&U_{h}(re_{n})\cdot\|\phi\|_{L^{\infty}},
\end{eqnarray}
where $U_{h}$ is the mapping in Theorem \ref{thm-1.1}.
By letting
$$
\phi(\eta)
=C\cdot\mathrm{sign}\left(P_h\big(re_{n},A^{-1}\eta\big) - \frac{(1-r^{2})^{n-1}}{(1+r^{2})^{n-1}} \right),
$$
in $\mathbb{B}^{n}$,
we obtain the sharpness of \eqref{eq-3.1},
 where $C$ is a constant.

Further, by Lemma \ref{lem-2.6},
we obtain that the inequality \eqref{eq-1.4} holds for $p=\infty$
and this inequality is also sharp.

Next, we discuss the property of $G_{\infty}$.
It follows from \eqref{eq-3.1} that
\begin{eqnarray}\label{eq-3.2}
G_{\infty}(r)=\int_{\mathbb{S}^{n-1}} |P_h(re_{n},\eta)-a^{*}| d\sigma(\eta)=U_{h}(re_{n}),
\end{eqnarray}
where
$a^*=\frac{(1-r^{2})^{n-1}}{(1+r^{2})^{n-1}}$.
Obviously, $G_{\infty}(0)$=0.
For $x\in \mathbb{B}^{n}$, since $U_{h}(x)$ is a hyperbolic harmonic mapping,
we see that $G_{\infty}(r)=U_{h}(re_{n})$ is differentiable in [0,1).
Moreover, by replacing
$\phi_{*}= \chi_{\mathbb{S}_{+}^{n-1}} -\chi_{\mathbb{S}_{-}^{n-1}} $ and
$u_{*}=  U_{h}$,
respectively,
the similar reasoning as in the proof of Claim \ref{claim-2.3} shows that
$G_{\infty}(r)$ is an increasing diffeomorphism in [0,1).
Since \eqref{eq-1.1}, \eqref{eq-3.1} and \eqref{eq-3.2} yield
$$
1\geq \sup_{r\in[0,1)}U_{h}(re_{n})
=\sup_{r\in[0,1)}G_{\infty}(r)\geq
 \frac{ \|U_{h} \|_{\mathcal{H}^{\infty}}}{ \| \chi_{\mathbb{S}_{+}^{n-1}} -\chi_{\mathbb{S}_{-}^{n-1}}  \|_{L^{\infty}}}=1,
$$
we see that $\lim_{r\rightarrow1^{-}}G_{\infty}(r)=1$.
 Therefore, $G_{\infty}$ maps $[0,1)$ onto $[0,1)$.

In the following, we compute the values of $U_{h}(re_{n})$
(or $G_{\infty}(r)$), where $r\in[0,1)$.
By using spherical coordinate transformation, we obtain that
\begin{eqnarray*}
U_{h}(r e_{n})
&=&(1-r^2)^{n-1} \frac{\Gamma(\frac{n}{2}) }{ \sqrt{\pi} \Gamma(\frac{n-1}{2})} \int_{0}^{\pi}
\frac{\sin^{n-2}\theta}{(1+r^{2}-2r\cos\theta)^{n-1}}(\chi_{\mathbb{S}_{+}^{n-1}} -\chi_{\mathbb{S}_{-}^{n-1}}) \;d\theta.
\end{eqnarray*}
 Elementary calculations lead to
\begin{eqnarray*}
&\;\;& \int_{0}^{\pi}
\frac{\sin^{n-2}\theta}{(1+r^{2}-2r\cos\theta)^{n-1}}(\chi_{\mathbb{S}_{+}^{n-1}} -\chi_{\mathbb{S}_{-}^{n-1}}) \;d\theta\\
&=& \int_{0}^{\frac{\pi}{2}}
\left(\frac{\sin^{n-2}\theta}{(1+r^{2}-2r\cos\theta)^{n-1}}-\frac{\sin^{n-2}\theta}{(1+r^{2}+2r\cos\theta)^{n-1}}\right) \;d\theta\\
&=&\frac{1}{(1+r^2)^{n-1}} \sum_{k=0}^{\infty} \int_{0}^{\frac{\pi}{2}}\sin^{n-2}\theta \cos^{k}\theta  d\theta \cdot
\left(
\begin{array}{c}
-(n-1) \\
k \\
\end{array}
\right)
 \cdot \big( (-1)^{k}-1\big) \cdot\left( \frac{2r}{1+r^{2}}\right)^{k} ,
\end{eqnarray*}
where
$$
\left(
\begin{array}{c}
-(n-1) \\
k \\
\end{array}
\right)
 =\frac{(-1)^{k}\Gamma(k+n-1)}{k!\Gamma(n-1)}.
 $$
Since
 $$
 \int_{0}^{\frac{\pi}{2}}\sin^{n-2}\theta \cos^{k}\theta  d\theta= \frac{\Gamma(\frac{1+k}{2}) \Gamma(\frac{n-1}{2})}{ 2 \Gamma(\frac{n+k}{2})}
 $$
(cf. \cite[Page 19]{rain}), then
$$
U_{h}(r e_{n})
= \frac{2r(1-r^2)^{n-1}\Gamma(\frac{n}{2})}{\sqrt{\pi}(1+r^{2})^{n}\Gamma(n-1)}
\sum_{k=0}^{\infty}\frac{\Gamma(k+1) \Gamma(2k+n)}{\Gamma(k+\frac{n+1}{2})  (2k+1)!} \left(\frac{2r}{1+r^{2}}\right)^{2k}.
$$
By Legendre's duplication formula (cf. \cite[Page 24]{rain}), we get
$$
\frac{\Gamma(2k+n)}{ (2k+1)!}
=\frac{2^{2k+n-1} \Gamma(k+\frac{n}{2}) \Gamma(k+\frac{n}{2}+\frac{1}{2})}{\sqrt{\pi}(2k+1)!}
=\frac{2^{n-1} \Gamma(k+\frac{n}{2}) \Gamma(k+\frac{n}{2}+\frac{1}{2})}{\sqrt{\pi} \cdot(\frac{3}{2})_{k} \cdot k!}.
$$
Therefore,
$$
U_{h}(r e_{n})
=\frac{2^{n}r(1-r^2)^{n-1}\Gamma(\frac{n}{2})}{ \pi(1+r^{2})^{n}\Gamma(n-1)}
\sum_{k=0}^{\infty}\frac{ \Gamma(k+1)\Gamma(k+\frac{n}{2}) \Gamma(k+\frac{n}{2}+\frac{1}{2})}{\Gamma(k+\frac{n+1}{2}) \cdot(\frac{3}{2})_{j} \cdot k! }\left(\frac{4r^{2}}{(1+r^{2})^{2}}\right)^{k},
$$
which, together with the fact $\Gamma(k+\alpha)=(\alpha)_{k}\cdot \Gamma(\alpha)$ for any $\alpha>0$,
implies that
\begin{eqnarray*}
U_{h}(r e_{n})
&=&\frac{2^{n}r(1-r^2)^{n-1}\Gamma^{2}(\frac{n}{2})}{ \pi(1+r^{2})^{n}\Gamma(n-1)}
\sum_{k=0}^{\infty} \frac{(1)_{k} (\frac{n}{2})_{k}}{(\frac{3}{2})_{k} k!}\left(\frac{4r^{2}}{(1+r^{2})^{2}}\right)^{k}\\
&=&\frac{2^{n}r(1-r^2)^{n-1}\big(\Gamma(\frac{n}{2})\big)^{2}}{ \pi(1+r^{2})^{n}\Gamma(n-1)}
\;_{2}F_{1} \left(1,\frac{n}{2};\frac{3}{2}; \frac{4r^{2}}{(1+r^{2})^{2}}\right).
 \end{eqnarray*}

The following table shows first few functions $U_{h}(r e_{n})$, where $r\in[0,1)$.

 \begin{center}
\tabcolsep=12.5pt
\renewcommand\arraystretch{1.5}
\begin{tabular}{|c|c|c|c|c| }\hline
$n$ & $2$ & $3$ & $4$ & $5$ \\ \hline
$U_{h}(r e_{n})$ & $ \frac{4}{\pi}\arctan r$ & $ \frac{2r}{1+r^{2}}$ & $  \frac{4r(1-r^{2})}{\pi(1+r^{2})^{2}}+\frac{4}{\pi} \arctan r $ & $ \frac{3r+2r^3+3r^5}{(1+r^{2})^3}$  \\ \hline
\end{tabular}
 \medskip

 Table 1. Value of $U_{h}(r e_{n})$.
\end{center}

\medskip
\medskip

\subsection{The case $p=2$}
In this case we deal with the extremal problem
$$
G_2(r)
=\left(\inf_{a\in\mathbb{R}}
\int_{\mathbb{S}^{n-1}} |P_h(re_{n},\eta)-a|^2 d\sigma(\eta)\right)^{1/2}.
$$
By \cite[Equality (2.6) and Theorem G]{chen2018}, we obtain that
\begin{eqnarray*}
&\;&
\int_{\mathbb{S}^{n-1}}|P_h(re_{n},\eta)-a|^2 d\sigma(\eta)\\
&=&\int_{\mathbb{S}^{n-1}} P^{2}_{h}(rN,\eta) d\sigma(\eta)+a^2\int_{\mathbb{S}^{n-1}}  d\sigma(\eta)
-2 a \int_{\mathbb{S}^{n-1}} P_h(re_{n},\eta) d\sigma(\eta)\\
&=&(1-r^2)^{2n-2}F\left(2n-2,\frac{3n-2}{2};\frac{n}{2};r^2\right)+a^2-2a.
\end{eqnarray*}
So $a^*=1$ and
$$
\left(\inf_a \int_{\mathbb{S}^{n-1}} |P_h(re_{n},\eta)-a|^2 d\sigma(\eta)\right)^{1/2}
=\sqrt{(1-r^2)^{2n-2}\cdot F\left(2n-2,\frac{3n-2}{2};\frac{n}{2};r^2\right)-1}.$$
This, together with \eqref{eq-2.17}, implies that
$$
|u(x)|\leq G_2(r)\cdot\|\phi\|_{L^{p}}=\|\phi\|_{L^{p}}\sqrt{(1-r^2)^{2n-2}\cdot F\left(2n-2,\frac{3n-2}{2};\frac{n}{2};r^2\right)-1}.
$$
\subsection{The case $p=1$}
In this case we have
$$
G_1(r)=\inf_{a\in \mathbb{R}} \sup_{\eta\in\mathbb{S}^{n-1}} |P_h(re_{n},\eta)-a|
$$
for any $r\in[0,1)$.
Since
$$
\max_{\eta\in\mathbb{S}^{n-1}}  P_h(re_{n},\eta)=\frac{(1+r)^{n-1}}{(1-r)^{n-1}}
\quad\text{and}\quad
 \min_{\eta\in\mathbb{S}^{n-1}} P_h(re_{n},\eta)=\frac{(1-r)^{n-1}}{(1+r)^{n-1}},
$$
we easily conclude that
$$
a^* =\frac{1}{2}\left(\frac{(1+r)^{n-1}}{(1-r)^{n-1}}+\frac{(1-r)^{n-1}}{(1+r)^{n-1}}\right)
\; \text{and}\;
G_1(r)=\frac{1}{2}\left(\frac{(1+r)^{n-1}}{(1-r)^{n-1}}-\frac{(1-r)^{n-1}}{(1+r)^{n-1}}\right).$$
Observe that $G_1(r) = |P_h(re_{n},e_n)-a^*|=|P_h(re_{n},-e_n)-a^*|$, and this fact is important to construct the minimizing sequence.
Obviously, $G_{1}(r)$ is an increasing diffeomorphism from $[0,1)$ onto $[0,\infty)$.
Then for any $x\in \mathbb{B}^{n}$ with $|x|=r$, we have
\begin{eqnarray}\label{eq-3.3}
|u(x)|
\leq G_1(r)\cdot\|\phi\|_{L^{1}}
=\frac{1}{2}\left(\frac{(1+r)^{n-1}}{(1-r)^{n-1}}-\frac{(1-r)^{n-1}}{(1+r)^{n-1}}\right)\|\phi\|_{L^{1}}.
\end{eqnarray}
This, together with the fact $u(x)=D u(0)x+o(x)$,
implies
\begin{eqnarray}\label{eq-3.4}
\|Du(0)\|
\leq \limsup_{x\rightarrow0}\frac{|u(x)|}{|x|}
\leq \limsup_{r \rightarrow0^{+}}\frac{G_1(r)}{r}\|\phi\|_{L^{1}}
=2(n-1)\|\phi\|_{L^{1}}.
\end{eqnarray}

Now, we show the sharpness of \eqref{eq-3.3} and \eqref{eq-3.4}.
For $i\in\mathbb{Z}^{+}$, $\eta\in\mathbb{S}^{n-1}$ and $x\in\mathbb{B}^{n}$, we let
$$
\phi_{i}(\eta)=\frac{ \chi_{\Omega_{i}}(\eta)}{2\| \chi_{\Omega_{i}} \|_{L^{1}}}-\frac{\chi_{\Omega^{'}_{i}}(\eta) }{2\| \chi_{\Omega'_{i}}\|_{L^{1}}}
\quad\text{and}\quad
u_{i}(x)=P_{h}[\phi_{i}](x),
$$
where $\Omega_{i}=\{\eta\in\mathbb{S}^{n-1}:|\eta-e_{n}|\leq\frac{1}{i}\}$ and
$\Omega^{'}_{i}=\{\eta\in\mathbb{S}^{n-1}:|\eta+e_{n}|\leq\frac{1}{i}\}$.
Obviously,
$$ \|\phi_{i}\|_{L^{1}}=1,\quad \int_{\mathbb{S}^{n-1}}\phi_{i}(\eta)d\sigma(\eta)=0
\quad
 \text{and}
 \quad
 u_{i}(0)=0.
 $$
By elementary calculations, we get
\begin{eqnarray}\label{eq-3.5}
\lim_{i\rightarrow\infty} u_{i}(x)
 &=& \lim_{i\rightarrow\infty}\int_{\mathbb{S}^{n-1}}(P_{h}(x,\eta)-a^{*})
 \left(\frac{ \chi_{\Omega_{i}}(\eta)}{2\| \chi_{\Omega_{i}} \|_{L^{1}}}-\frac{\chi_{\Omega^{'}_{i}}(\eta)}{2\| \chi_{\Omega'_{i}} \|_{L^{1}}} \right)
  d\sigma(\eta)\\\nonumber
&=&\lim_{i\rightarrow\infty} \int_{\mathbb{S}^{n-1}}|P_{h}(x,\eta)-a^{*}|
\left(\frac{ \chi_{\Omega_{i}}(\eta)}{2\| \chi_{\Omega_{i}} \|_{L^{1}}}+\frac{\chi_{\Omega^{'}_{i}}(\eta)}{2\| \chi_{\Omega'_{i}} \|_{L^{1}}} \right)
d\sigma(\eta).
\end{eqnarray}

\begin{claim}\label{claim-3.1} For any $r\in[0,1)$,
$$
\lim_{i\rightarrow\infty}\int_{\mathbb{S}^{n-1}}|P_{h}(re_{n},\eta)-a^{*}|\cdot
 \frac{ \chi_{\Omega_{i}}(\eta)}{ \| \chi_{\Omega_{i}} \|_{L^{1}}}d\sigma(\eta)
 =G_{1}(r).
$$
\end{claim}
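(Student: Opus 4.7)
The plan is to view this as a straightforward continuity/approximation-of-identity argument: the measures $\chi_{\Omega_i} d\sigma / \|\chi_{\Omega_i}\|_{L^1}$ form a sequence of probability measures on $\mathbb{S}^{n-1}$ concentrating at $e_n$, so integrating any continuous function against them returns that function's value at $e_n$. The main content is therefore to identify the pointwise value correctly and to justify the passage to the limit.

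First I would record that the integrand $f(\eta) := |P_h(re_n,\eta) - a^*|$ is a continuous function of $\eta \in \mathbb{S}^{n-1}$, since $P_h(re_n,\eta) = \big((1-r^2)/|re_n-\eta|^2\big)^{n-1}$ is continuous in $\eta$ away from $re_n$, and in particular on all of $\mathbb{S}^{n-1}$ (as $r<1$). Next, I would evaluate $f$ at $\eta = e_n$:
\[
P_h(re_n,e_n) = \left(\frac{1-r^2}{(1-r)^2}\right)^{n-1} = \left(\frac{1+r}{1-r}\right)^{n-1},
\]
so by the explicit formulas for $a^*$ and $G_1(r)$ established just above the claim,
\[
f(e_n) = \left|\left(\frac{1+r}{1-r}\right)^{n-1} - \frac{1}{2}\left(\frac{(1+r)^{n-1}}{(1-r)^{n-1}} + \frac{(1-r)^{n-1}}{(1+r)^{n-1}}\right)\right| = G_1(r).
\]

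Then I would complete the proof by a direct $\varepsilon$-argument. Fix $\varepsilon>0$. By continuity of $f$ at $e_n$, there exists $\delta>0$ such that $|f(\eta) - G_1(r)| < \varepsilon$ whenever $\eta \in \mathbb{S}^{n-1}$ and $|\eta - e_n| < \delta$. Since $\Omega_i \subset \{\eta : |\eta - e_n| < \delta\}$ for all $i$ with $1/i < \delta$, and since $\chi_{\Omega_i}/\|\chi_{\Omega_i}\|_{L^1}$ integrates to $1$ against $d\sigma$, one has for such $i$
\[
\left|\int_{\mathbb{S}^{n-1}} f(\eta)\,\frac{\chi_{\Omega_i}(\eta)}{\|\chi_{\Omega_i}\|_{L^1}}\,d\sigma(\eta) - G_1(r)\right| \le \int_{\mathbb{S}^{n-1}} |f(\eta) - G_1(r)|\,\frac{\chi_{\Omega_i}(\eta)}{\|\chi_{\Omega_i}\|_{L^1}}\,d\sigma(\eta) < \varepsilon,
\]
and letting $i\to\infty$ yields the claim.

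There is essentially no genuine obstacle: the only slightly delicate point is verifying that $f(e_n) = G_1(r)$ cleanly from the two explicit formulas for $a^*$ and $G_1(r)$; everything else is a routine approximate-identity estimate. Since the author already remarked (just before introducing $\phi_i$) that $G_1(r) = |P_h(re_n,e_n) - a^*| = |P_h(re_n,-e_n)-a^*|$, this identification is immediate, and the analogous statement with $\chi_{\Omega'_i}$ in place of $\chi_{\Omega_i}$ (needed to finish the sharpness argument for \eqref{eq-3.3} and \eqref{eq-3.4} via \eqref{eq-3.5}) follows by the same reasoning with $-e_n$ in place of $e_n$.
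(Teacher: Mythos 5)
Your proof is correct and takes essentially the same route as the paper: both view $\chi_{\Omega_i}\,d\sigma/\|\chi_{\Omega_i}\|_{L^1}$ as an approximate identity concentrating at $e_n$, and combine the continuity of $\eta\mapsto |P_h(re_n,\eta)-a^*|$ with the identification $|P_h(re_n,e_n)-a^*|=G_1(r)$. If anything your write-up is a touch cleaner, since the paper's opening line phrases the uniform closeness on $\Omega_i$ somewhat loosely as a pointwise limit of $|P_h(re_n,\eta)-a^*|\cdot\chi_{\Omega_i}(\eta)$, whereas your $\varepsilon$--$\delta$ continuity argument says exactly what is meant.
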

Observe that
$$
 \lim_{i\rightarrow\infty}|P_{h}(re_{n},\eta)-a^{*}| \cdot \chi_{\Omega_{i}}(\eta)=G_{1}(r).
$$
Then for any $\varepsilon>0$, there exists a positive integer $m_{1}=m_{1}(\varepsilon)$ such that for any $i\geq m_{1} $,
$$
\big\|P_{h}(re_{n},\eta)-a^{*}| \cdot \chi_{\Omega_{i}}(\eta)-G_{1}(r)\big|<\varepsilon.
$$
Since
$\int_{\mathbb{S}^{n-1}}
\frac{ \chi_{\Omega_{i}}(\eta)}{\|\chi_{\Omega_{i}}\|_{L^{1}}}
d\sigma(\eta)=1$, then for any $i\geq m_{1} $ and $r\in[0,1)$,
\begin{eqnarray*}
&\;\;&\left|\int_{\mathbb{S}^{n-1}} |P_{h}(re_{n},\eta)-a^{*}|\cdot
 \frac{ \chi_{\Omega_{i}}(\eta)}{ \| \chi_{\Omega_{i}} \|_{L^{1}}}d\sigma(\eta)
 -G_{1}(r)\right|\\
& =&\int_{\mathbb{S}^{n-1}} \big\|P_{h}(re_{n},\eta)-a^{*}|\cdot \chi_{\Omega_{i}}(\eta) -G_{1}(r)\big|
 \cdot\frac{ \chi_{\Omega_{i}}(\eta)}{ \| \chi_{\Omega_{i}} \|_{L^{1}}}d\sigma(\eta)
 \leq\varepsilon,
\end{eqnarray*}
which means that the claim is true.

The similar reasoning as in the proof of Claim \ref{claim-3.1} shows that
$$
\lim_{i\rightarrow\infty}\int_{\mathbb{S}^{n-1}}|P_{h}(re_{n},\eta)-a^{*}|\cdot
 \frac{ \chi_{\Omega'_{i}}(\eta)}{ \| \chi_{\Omega'_{i}} \|_{L^{1}}}d\sigma(\eta)
 =G_{1}(r).
$$
This, together with \eqref{eq-3.5}, Claim \ref{claim-3.1} and the fact $\|\phi_{i}\|_{L^{1}}=1$, shows that
$$
\lim_{i\rightarrow\infty}|u_{i}(x)|
=G_{1}(r)\lim_{i\rightarrow\infty}
\|\phi_{i}\|_{L^{1}}.
$$
The sharpness of \eqref{eq-3.3} follows.

Further, since the similar reasoning as in the proof of Claim \ref{claim-3.1} implies
$$
\lim_{i\rightarrow\infty}\int_{\mathbb{S}^{n-1}} |\eta_{n}|\cdot
 \frac{ \chi_{\Omega_{i}}(\eta)}{ \| \chi_{\Omega_{i}} \|_{L^{1}}}d\sigma(\eta)
=
\lim_{i\rightarrow\infty}\int_{\mathbb{S}^{n-1}} |\eta_{n}|\cdot
 \frac{ \chi_{\Omega'_{i}}(\eta)}{ \| \chi_{\Omega'_{i}} \|_{L^{1}}}d\sigma(\eta)
  =1,
$$
then by similar similar arguments as in the proof of Lemma \ref{lem-2.6},
we get
\begin{eqnarray*}
\lim_{i\rightarrow\infty} \|\nabla u_{i}(0)\|
&\geq&\lim_{i\rightarrow\infty}
2(n-1) \left|\int_{\mathbb{S}^{n-1}} \langle\eta,e_{n}\rangle \phi_{i}(\eta) d\sigma(\eta)\right|\\
&\geq&2(n-1) \lim_{i\rightarrow\infty}\int_{\mathbb{S}^{n-1}} |\eta_{n}|\cdot \left(\frac{ \chi_{\Omega_{i}}(\eta)}{2\| \chi_{\Omega_{i}} \|_{L^{1}}}+\frac{\chi_{\Omega^{'}_{i}}(\eta)}{2\| \chi_{\Omega'_{i}} \|_{L^{1}}} \right) d\sigma(\eta) \\
&=&2(n-1)\lim_{i\rightarrow\infty} \|\phi_{i}\|_{L^{1}},
\end{eqnarray*}
which shows the sharpness of \eqref{eq-3.4}.

\vspace*{5mm}
\noindent{\bf Funding.}
 The first author was partly supported by NSFS of China (No. 11571216, 11671127 and 11801166),
 NSF of Hunan Province (No. 2018JJ3327), China Scholarship Council and the construct program of the key discipline in Hunan Province.

\end{document}